\newcommand{\klockan}{\the\hours:{\ifnum\minutes<10 0\fi}\the\minutes}
\newcommand{\tid}{\today\ \klockan}
\newcommand{\prtid}{\smash{\raise 10mm \hbox{\LaTeX ed \tid}}}
\renewcommand{\prtid}{}
\def\sectionmark#1{} %\markboth{{\sectnr #1}}{{\sectnr #1}}} %Journal
\def\subsectionmark#1{}
\newcommand{\sectnr}{\ifnum \c@secnumdepth >\z@
                 \thesection.\hskip 1em\relax \fi}
\def\@evenhead{\footnotesize\rm\thepage\hfil\leftmark\hfil\llap{\prtid}}
\def\@oddhead{\footnotesize\rm\rlap{\prtid}\hfil\rightmark\hfil\thepage}
\def\tableofcontents{\section*{Contents} %\@mkboth{Contents}{Contents}} %Journal
 \@starttoc{toc}}
\def\@biblabel#1{#1.}
\let\Thebibliography=\thebibliography
\renewcommand{\thebibliography}[1]{\def\@mkboth##1##2{}\Thebibliography{#1}
\addcontentsline{toc}{section}{References}
\frenchspacing % Maybe not needed
% Deleting extra vertical space
\setlength{\@topsep}{0pt}% Delete if extra space before list
\setlength{\itemsep}{0pt}%
\setlength{\parskip}{0pt plus 2pt}%
}
\def\mdots@{\mathinner.\nonscript\!.%
 \ifx\next,.\else\ifx\next;.\else\ifx\next..\else
 \nonscript\!\mathinner.\fi\fi\fi}
\let\ldots\mdots@
\let\cdots\mdots@
\let\dotso\mdots@
\let\dotsb\mdots@
\let\dotsm\mdots@
\let\dotsc\mdots@
\def\vdots{\vbox{\baselineskip2.8\p@ \lineskiplimit\z@
    \kern6\p@\hbox{.}\hbox{.}\hbox{.}\kern3\p@}}
\def\ddots{\mathinner{\mkern1mu\raise8.6\p@\vbox{\kern7\p@\hbox{.}}%
    \raise5.8\p@\hbox{.}\raise3\p@\hbox{.}\mkern1mu}}
\let\Enumerate=\enumerate
\renewcommand{\enumerate}{\Enumerate%
% Deleting extra vertical space
\setlength{\@topsep}{0pt}% Delete if extra space before list
\setlength{\itemsep}{0pt}%
\setlength{\parskip}{0pt plus 1pt}%
\renewcommand{\theenumi}{\textup{(\alph{enumi})}}%
\renewcommand{\labelenumi}{\theenumi}%
}
\let\endEnumerate=\endenumerate
\renewcommand{\endenumerate}{\endEnumerate\unskip}
\def\@seccntformat#1{\csname the#1\endcsname.\quad}
\newcommand{\authortitle}[2]{\author{#1}\title{#2}\markboth{#1}{#2}}
\newcommand{\auth}[2]{{#1, #2.}}
\newcommand{\art}[6]{{\sc #1, \rm #2, \it #3\/ \bf #4 \rm (#5), \mbox{#6}.}}
\newcommand{\artprep}[3]{{\sc #1, \rm #2, \rm #3.}}
\newcommand{\arttoappear}[3]{{\sc #1, \rm #2, to appear in \it #3}}
\newcommand{\book}[3]{{\sc #1, \it #2, \rm #3.}}
\newcommand{\AND}{{\rm and }}
\newtheoremstyle{descriptive}%
  {\topsep}   %{\medskipamount}          % Space above
  {\topsep}   %  {\medskipamount}          % Space below
  {\rmfamily} % Body font
  {}          % Indent
  {\bfseries} % Head font
  {.}         % Punctuation after thm head
  { }         % Space after thm head
  {}          % Thm head spec(?)
\newtheoremstyle{propositional}%
  {\topsep}   %  {\medskipamount}          % Space above
  {\topsep}   %  {\medskipamount}          % Space below
  {\itshape}  % Body font
  {}          % Indent
  {\bfseries} % Head font
  {.}         % Punctuation after thm head
  { }         % Space after thm head
  {}          % Thm head spec(?)
\newtheoremstyle{remarkstyle}%
  {\topsep}   %  {\medskipamount}          % Space above
  {\topsep}   %  {\medskipamount}          % Space below
  {\rmfamily}  % Body font
  {}          % Indent
  {\itshape} % Head font
  {.}         % Punctuation after thm head
  { }         % Space after thm head
  {}          % Thm head spec(?)
\theoremstyle{propositional}
\newtheorem{thm}{Theorem}[section]
\newtheorem{prop}[thm]{Proposition}
\newtheorem{lem}[thm]{Lemma}
\newtheorem{cor}[thm]{Corollary}
\theoremstyle{descriptive}
\newtheorem{deff}[thm]{Definition}
\newtheorem{example}[thm]{Example}
\renewenvironment{proof}[1][\proofname]{\par
  \pushQED{\qed}%
  \normalfont
%\topsep6\p@\@plus6\p@\relax % Removed by Anders Bj\"orn
  \trivlist
  \item[\hskip\labelsep
        \itshape
    #1\@addpunct{.}]\ignorespaces
}{%
  \popQED\endtrivlist\@endpefalse
}
\newcommand{\setm}{\setminus}
\def\vint{\mathop{\mathchoice%
          {\setbox0\hbox{$\displaystyle\intop$}\kern 0.22\wd0%
           \vcenter{\hrule width 0.6\wd0}\kern -0.82\wd0}%
          {\setbox0\hbox{$\textstyle\intop$}\kern 0.2\wd0%
           \vcenter{\hrule width 0.6\wd0}\kern -0.8\wd0}%
          {\setbox0\hbox{$\scriptstyle\intop$}\kern 0.2\wd0%
           \vcenter{\hrule width 0.6\wd0}\kern -0.8\wd0}%
          {\setbox0\hbox{$\scriptscriptstyle\intop$}\kern 0.2\wd0%
           \vcenter{\hrule width 0.6\wd0}\kern -0.8\wd0}}%
          \mathopen{}\int}
\DeclareMathOperator{\Lip}{Lip}
\DeclareMathOperator{\interior}{int}
\DeclareMathOperator*{\essinf}{ess\,inf}
\newcommand{\loc}{_{\rm loc}}
\newcommand{\alp}{\alpha}
\newcommand{\ga}{\gamma}
\newcommand{\de}{\delta}
\newcommand{\eps}{\varepsilon}
\newcommand{\la}{\lambda}
\newcommand{\p}{{$p\mspace{1mu}$}}
\newcommand{\R}{\mathbf{R}}
\newcommand{\Sphere}{\mathbf{S}}
\newcommand{\Z}{\mathbf{Z}}
\newcommand{\Np}{N^{1,p}}
\newcommand{\Nploc}{N^{1,p}\loc}
\newcommand{\ut}{\tilde{u}}
\newcommand{\wh}{\widehat{w}}
\newcommand{\muh}{\hat{\mu}}
\newcommand{\mut}{\tilde{\mu}}
\newcommand{\Ga}{\Gamma}
\newcommand{\Lploc}{L^p\loc}
\newcommand{\uhat}{{\hat{u}}}
\newcommand{\wt}{\widetilde{w}}
\newcommand{\CPI}{C_{\rm PI}}
\numberwithin{equation}{section}
\newcommand{\eqv}{\mathchoice{\quad \Longleftrightarrow \quad}{\Leftrightarrow}
                {\Leftrightarrow}{\Leftrightarrow}}
\newcommand{\imp}{\mathchoice{\quad \Longrightarrow \quad}{\Rightarrow}
                {\Rightarrow}{\Rightarrow}}
\newenvironment{ack}{\medskip{\it Acknowledgement.}}{}
\begin{document}

\authortitle{Anders Bj\"orn, Jana Bj\"orn
    and Nageswari Shanmugalingam}
{Locally \p-admissible measures on $\R$}
\author{
Anders Bj\"orn \\
\it\small Department of Mathematics, Link\"oping University, \\
\it\small SE-581 83 Link\"oping, Sweden\/{\rm ;}
\it \small anders.bjorn@liu.se
\\
\\
Jana Bj\"orn \\
\it\small Department of Mathematics, Link\"oping University, \\
\it\small SE-581 83 Link\"oping, Sweden\/{\rm ;}
\it \small jana.bjorn@liu.se
\\
\\
Nageswari Shanmugalingam
\\
\it \small  Department of Mathematical Sciences, University of Cincinnati, \\
\it \small  P.O.\ Box 210025, Cincinnati, OH 45221-0025, U.S.A.\/{\rm ;}
\it \small  shanmun@uc.edu
}

%\date{Preliminary version, \today}
\date{}
\maketitle

\noindent{\small
{\bf Abstract}.
In this note we show that locally \p-admissible measures on $\R$ 
necessarily come from local Muckenhoupt $A_p$ weights.
In the proof we employ the corresponding characterization
of global \p-admissible measures on $\R$
in terms of global $A_p$ weights due to Bj\"orn, Buckley and Keith,
together with tools from analysis in metric spaces, more specifically 
preservation of the doubling condition and Poincar\'e inequalities under 
flattening, due to Durand-Cartagena and Li.

As a consequence, the class of locally \p-admissible weights on $\R$ 
is invariant under addition and satisfies the lattice property.
We also show that measures that are \p-admissible on an interval
can be partially extended by periodical reflections to global \p-admissible
measures.
Surprisingly, the \p-admissibility has to hold on a larger interval than 
the reflected one, and an example shows that this is necessary. 

}

\bigskip

\noindent
{\small \emph{Key words and phrases}:
local Muckenhoupt $A_p$ weight,
locally doubling measure,
locally \p-admissible measure,
local Poincar\'e inequality.
}

\medskip

\noindent
{\small Mathematics Subject Classification (2010):
Primary: 26D10; Secondary: 31C45, 42B25, 46E35.
}

\section{Introduction}

Globally \p-admissible weights for Sobolev spaces 
and differential equations on $\R^n$ 
were introduced in 
Heinonen--Kilpel\"ainen--Martio~\cite{HeKiMa}.
Four conditions were imposed on such weights, 
which were later reduced to the following two conditions (the remaining 
two being redundant),
see  \cite[2nd ed., Section~20]{HeKiMa}.
Even earlier, such weights were used to study regularity of 
linear degenerate
elliptic equations (with $p=2$) in Fabes--Jerison--Kenig~\cite{FaJeKe} and 
Fabes--Kenig--Serapioni~\cite{FaKeSe}.

\begin{deff}
A measure $\mu$ on $\R^n$ is \emph{globally \p-admissible}, $1 \le p <\infty$,
if it is globally doubling 
and supports a global \p-Poincar\'e inequality.
If $d\mu=w\,dx$ we also say that 
the weight $w$ is \emph{globally \p-admissible}.
\end{deff}

Muckenhoupt $A_p$ weights are globally \p-admissible
(see  \cite[Theorem~15.21]{HeKiMa} and \cite[Theorem~4]{JBFennAnn}), 
but the converse
is not true in $\R^n$, $n \ge 2$.
On the other hand, on $\R$ even globally \p-admissible \emph{measures}
are given by global $A_p$ weights,
as was shown in Bj\"orn--Buckley--Keith~\cite[Theorem~2]{BBK-Ap}.

In many situations it is local, rather than global, properties
that are relevant, especially when dealing with local properties
such as regularity of solutions to differential equations, 
see the studies in 
Danielli--Garofalo--Marola~\cite{DaGaMa}, Garofalo--Marola~\cite{GaMa} 
and Holopainen--Shanmugalingam~\cite{HoSh}.
There are several different possibilities for formulating
local doubling conditions and local  Poincar\'e inequalities. 
The conditions we impose on the measure 
do not
require any uniformity in the constants nor in the radii involved, 
and are thus truly local.
However, uniformity is natural in many situations, and
then we are able to conclude slightly more, see
Section~\ref{sect-uniform}.

The principal aim of this paper is to obtain the following characterization
of locally \p-admissible measures on $\R$.

\begin{thm} \label{thm-p-adm-char-intro}
Let $p\ge1$ and let $\mu$ be a measure on $\R$.
Then the following are equivalent\/\textup{:}
\begin{enumerate}
\renewcommand{\theenumi}{\textup{(\roman{enumi})}}%
\item \label{d-loc}
$\mu$ is locally \p-admissible\/\textup{;}
\item \label{d-loc-Ap}
$d\mu=w\,dx$, where $w$ is a local $A_p$ weight\/\textup{;}
\item \label{d-glob-Ap}
$d\mu=w\,dx$, and for each bounded interval $I\subset \R$ there is 
a global $A_p$ weight $\wt$ on $\R$ such that $\wt=w$ on $I$.
\end{enumerate}
\end{thm}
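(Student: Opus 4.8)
The plan is to prove the three conditions equivalent via the implications \ref{d-glob-Ap}$\Rightarrow$\ref{d-loc-Ap}$\Rightarrow$\ref{d-loc} and \ref{d-loc}$\Rightarrow$\ref{d-loc-Ap}$\Rightarrow$\ref{d-glob-Ap}; of these four implications only the passage from \ref{d-loc} to \ref{d-loc-Ap} is genuinely hard, and it is where the flattening machinery enters. For \ref{d-glob-Ap}$\Rightarrow$\ref{d-loc-Ap}, fix a bounded interval $I$ and let $\wt$ be the global $A_p$ weight from \ref{d-glob-Ap}, so $\wt=w$ on $I$. For any subinterval $J\subset I$ the weights agree on $J$, so the $A_p$ quantity of $w$ over $J$ equals that of $\wt$ and is at most the global $A_p$ constant of $\wt$; since $I$ is arbitrary, $w$ is a local $A_p$ weight. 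For \ref{d-loc-Ap}$\Rightarrow$\ref{d-loc}, I would localize the classical theory: the proof that an $A_p$ weight is doubling and supports a \p-Poincar\'e inequality, as in \cite[Theorem~15.21]{HeKiMa}, runs interval by interval with constants depending only on the $A_p$ constant over the relevant intervals, so restricting to subintervals of a fixed bounded interval and inserting the local $A_p$ bound yields a local doubling condition and a local \p-Poincar\'e inequality with (non-uniform) interval-dependent constants.

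For \ref{d-loc-Ap}$\Rightarrow$\ref{d-glob-Ap} I would extend the local $A_p$ weight by periodic reflection. Given a bounded interval $I$, choose a bounded interval $[c,d]\supset I$; since $w$ is a local $A_p$ weight it is a genuine $A_p$ weight on $[c,d]$. Reflecting $w|_{[c,d]}$ evenly across the endpoints and continuing periodically produces a weight $\wt$ on all of $\R$ that agrees with $w$ on $[c,d]$. Even reflection across a point preserves the $A_p$ condition, so $\wt$ is $A_p$ at small and intermediate scales, while over intervals spanning many periods the averages entering the $A_p$ quantity converge to their finite values over a single period; hence $\wt$ is globally $A_p$ with $\wt=w$ on $I$. (Because $w$ is $A_p$ on the larger interval $[c,d]\supset I$, there is no loss at the reflection seams.)

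The crux is \ref{d-loc}$\Rightarrow$\ref{d-loc-Ap}, where I would apply the flattening transformation of Durand-Cartagena and Li. Fix a bounded interval $I_0$ and choose a bounded open interval $(a,b)\supset I_0$; by local \p-admissibility, on $(a,b)$ the measure $\mu$ is doubling and supports a \p-Poincar\'e inequality for radii up to $b-a$. I would flatten $(a,b)$ at the endpoint $a$, deforming the metric and the measure by a density singular at $a$ so that $a$ is sent to infinity and $(a,b)$ is carried onto an unbounded half-line with $b$ as its finite endpoint. By the Durand-Cartagena--Li theorems on the preservation of doubling and of Poincar\'e inequalities under flattening, the transformed measure is globally doubling and supports a global \p-Poincar\'e inequality on the half-line; an even reflection across the finite endpoint makes it a globally \p-admissible measure on $\R$. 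The characterization of Bj\"orn--Buckley--Keith \cite[Theorem~2]{BBK-Ap} then identifies it with a global $A_p$ weight $\wh$ on $\R$. Pulling $\wh$ back through the inverse flattening, which is bi-Lipschitz with bounded distortion on compact subsets of $(a,b)$, recovers $w$ on $(a,b)$ by the change-of-variables formula; since subintervals of $I_0$ correspond to intervals in a compact part of the half-line, bounded away from both the finite endpoint and infinity, the global $A_p$ bound for $\wh$ transfers to a uniform $A_p$ bound for $w$ over subintervals of $I_0$. As $I_0$ is arbitrary, $w$ is a local $A_p$ weight.

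The main obstacle is the quantitative bookkeeping in this last step. One must confirm that the non-uniform local hypotheses on $(a,b)$ are exactly what the Durand-Cartagena--Li theorems consume (this is what dictates passing from $I_0$ to the larger interval $(a,b)$), track how the doubling and Poincar\'e constants behave under the conformal density and the reflection, and verify that the change of variables carrying $\wh$ back to $w$ preserves the $A_p$ condition over subintervals of $I_0$ with a constant uniform in the subinterval. Turning these qualitative preservation statements into the explicit estimates needed here is where the real work lies.
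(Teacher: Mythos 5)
Your outer implications are fine and essentially match the paper: \ref{d-glob-Ap}$\Rightarrow$\ref{d-loc-Ap} is the same trivial restriction, \ref{d-loc-Ap}$\Rightarrow$\ref{d-glob-Ap} by periodic reflection is exactly the paper's Lemma~\ref{lem-Ap-refl} (and your remark that one must first pass to a larger interval $[c,d]\supset I$ is correct), and for \ref{d-loc-Ap}$\Rightarrow$\ref{d-loc} the paper in fact reflects to a global $A_p$ weight and invokes the known global admissibility of $A_p$ weights (Corollary~\ref{cor-loc-Ap-imp-loc-adm}) rather than localizing \cite[Theorem~15.21]{HeKiMa}, but your localization would also work on $\R$.

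The genuine gap is in the crux \ref{d-loc}$\Rightarrow$\ref{d-loc-Ap}: you ask the flattening to preserve the exponent $p$, and it does not. The theorem of Durand-Cartagena--Li \cite[Theorem~4.1]{Esti-Xining} (note their title: preservation ``for large $p$'') only guarantees that the flattened measure satisfies a $q$-Poincar\'e inequality for some sufficiently large $q$, which may exceed $p$ and depends on $\mu$ and on the interval used. Consequently \cite[Theorem~2]{BBK-Ap} returns a global $A_q$ weight, and pulling it back through the flattening yields only a local $A_q$ condition for $w$ --- the exponent is lost precisely where the theorem needs it to be sharp. The paper is explicit about this and uses the flattening route \emph{only} to conclude absolute continuity, i.e.\ that $d\mu=w\,dx$ (Corollary~\ref{cor-loc-adm-R}); the local $A_p$ condition is then obtained by a direct argument absent from your proposal (Theorem~\ref{thm-local-Ap}): one tests the Lipschitz \p-Poincar\'e inequality on $\theta I$ with the function
\[
u(y)=\int_{-\infty}^{y}\frac{w(t)\chi_I(t)}{(w(t)+\eps)^{p/(p-1)}}\,dt,
\]
whose oscillation and derivative produce exactly the two factors in the $A_p$ quantity, and lets $\eps\to0$ (with a $\chi_{E_\eps}$ variant for $p=1$). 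Two further points you assert without proof: the flattening machinery of \cite{Esti-Xining} is formulated for compact spaces with globally doubling measures supporting global Poincar\'e inequalities, which is why the paper first transplants $\mu$ onto a circle by even reflection at an \emph{interior} point and gluing (Lemma~\ref{lem-adm-on-circle}) --- this step needs admissibility on the enlarged interval $(-M,2M)$ and is genuinely delicate, cf.\ Example~\ref{ex-not-extension}; and your ``even reflection across the finite endpoint preserves global admissibility'' is exactly the kind of statement that requires the Kinnunen--Shanmugalingam-type gluing estimate carried out in that lemma, since the Poincar\'e inequality (unlike the $A_p$ condition) does not survive reflection for free.
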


As a consequence of these characterizations we obtain the lattice
property for locally \p-admissible weights on $\R$, as well as the 
preservation of local \p-admissibility when taking finite sums of measures,
see Section~\ref{sect-cor-Ap}.
This complements some results in Kilpel\"ainen--Koskela--Masaoka~\cite{KilKoMa},
where such questions were studied for global $A_p$ weights 
and globally \p-admissible measures on $\R^n$.
As a byproduct, we provide an elementary proof of 
\cite[Proposition~4.3]{KilKoMa}, see Lemma~\ref{lem-Ap-lattice}.

This note is  a continuation of the systematic development of
local and semilocal doubling measures and Poincar\'e
inequalities on metric spaces from 
Bj\"orn--Bj\"orn~\cite{BBsemilocal} and~\cite{BBnoncomp}.
Local assumptions are also
natural for studying \p-harmonic and quasiharmonic
functions, and Theorem~\ref{thm-p-adm-char-intro} plays a role in 
Liouville type theorems on the real line, see
Bj\"orn--Bj\"orn--Shanmugalingam~\cite{BBSliouville}.

In \cite{ChuaWheeden}, Chua and Wheeden extensively studied which 
types of Poincar\'e inequalities hold on intervals,
and also obtained optimal constants.
Using their results, we show that,
in contrast to Theorem~\ref{thm-p-adm-char-intro},
a \p-admissible weight on a bounded interval 
is \emph{not} necessarily an $A_p$ weight on 
that interval, see Example~\ref{ex-not-extension}
and also Theorem~\ref{thm-local-Ap}.

The proof of
Theorem~\ref{thm-p-adm-char-intro} turned
out to be more complicated than we had expected,
and considerably more involved than 
the proof of the corresponding global result in
Bj\"orn--Buckley--Keith~\cite[Theorem~2]{BBK-Ap}.
In addition to careful estimates, we also use the metric space
theory.
More specifically, to show that a locally \p-admissible measure $\mu$
is absolutely continuous, 
we create a suitable \p-admissible measure  
on the circle $\Sphere^1$, and
then use a flattening argument
due to Durand-Cartagena--Li~\cite{Esti-Xining}
to obtain a globally $q$-admissible measure $\muh$
on $\R$ for some $q$.
We then have at our disposal the global result in 
\cite[Theorem~2]{BBK-Ap}
which yields
that $\muh$ is absolutely continuous and that the 
corresponding weight $\wh$ (given by $d\muh=\wh\,dx$) is
an $A_q$ weight.
The number $q$ obtained from \cite{Esti-Xining}
can be larger than $p$ 
(and depends on  $\mu$ as well as on the interval used in constructing
$\muh$). 
However, the only consequence we need from this step is that $\muh$
is absolutely continuous and hence so is $\mu$.
Once the absolute continuity of $\mu$ is in place we 
instead use a direct argument to
deduce the local
$A_p$ condition. To complete
the proof we also need
the fact from Bj\"orn--Bj\"orn~\cite{BBsemilocal}
that locally \p-admissible measures are semilocally \p-admissible.

Having characterized the locally \p-admissible measures $\mu$  on $\R$,
it is also  interesting to know how the minimal \p-weak upper gradients
behave for functions in the Newtonian Sobolev space $\Np(\R,\mu)$.
If $u$ is locally absolutely continuous on some interval,
then the fundamental
theorem of calculus shows that $|u'|$ is an upper gradient for $u$, and thus
$g_u\le|u'|$ a.e. 
For Lipschitz functions $u$ and arbitrary measures on $\R$, 
the minimal \p-weak upper gradient $g_u$ has been fully described in 
Di~Marino--Speight~\cite[Theorem~2]{DiMar-Spe}.
The following result addresses this question for general Newtonian functions
and weights on $\R$.

\begin{prop} \label{prop-ac}
Let $\mu$ be a measure on $\R$
and $1 < p <\infty$.
Assume that $d\mu = w \, dx$ and $w,w^{1/(1-p)}\in L^1\loc(I)$ for 
some\/ \textup{(}not necessarily open\/\textup{)}
interval $I\subset\R$.
Then every $u\in \Nploc(I,\mu)$ is locally absolutely continuous on $I$
and $g_u=|u'|$ a.e.
\end{prop}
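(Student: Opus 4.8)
The plan is to fix a compact subinterval $[a,b]\subset I$, small enough that $u\in\Np([a,b],\mu)$; since local absolute continuity on $I$ means absolute continuity on each such subinterval, it suffices to treat these. On $[a,b]$ the minimal \p-weak upper gradient satisfies $\int_a^b g_u^p w\,dx<\infty$, and by hypothesis $\int_a^b w^{1/(1-p)}\,dx<\infty$. H\"older's inequality with exponents $p$ and $p/(p-1)$ then gives
\begin{equation*}
\int_a^b g_u\,dx
=\int_a^b g_u w^{1/p}\,w^{-1/p}\,dx
\le\Bigl(\int_a^b g_u^p w\,dx\Bigr)^{1/p}
\Bigl(\int_a^b w^{1/(1-p)}\,dx\Bigr)^{(p-1)/p}<\infty,
\end{equation*}
so $g_u\in L^1([a,b])$ with respect to Lebesgue measure. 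The decisive point is that the \emph{same} duality, read in the opposite direction, shows that every nondegenerate subsegment carries positive modulus: if $[s,t]\subseteq[a,b]$ with $s<t$ and $\rho\ge0$ is admissible, i.e.\ $\int_s^t\rho\,dx\ge1$, then
\begin{equation*}
1\le\int_s^t\rho\,dx
\le\Bigl(\int_s^t\rho^p w\,dx\Bigr)^{1/p}
\Bigl(\int_s^t w^{1/(1-p)}\,dx\Bigr)^{(p-1)/p},
\end{equation*}
whence $\mathrm{Mod}_p(\{[s,t]\})\ge\bigl(\int_s^t w^{1/(1-p)}\,dx\bigr)^{-(p-1)}>0$. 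This is the only place where the integrability of $w^{1/(1-p)}$ is used.

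Next I would invoke the two standard null families attached to $g_u$: the family on which the upper gradient inequality fails has \p-modulus zero by definition, and the family $\{\gamma:\int_\gamma g_u\,ds=\infty\}$ also has \p-modulus zero (test it with $\rho=g_u/k$ and let $k\to\infty$, using $g_u\in L^p(\mu)$). By monotonicity of the modulus together with the positivity just established, neither null family can contain a nondegenerate segment. Hence for \emph{every} $[s,t]\subseteq[a,b]$ the inequality $|u(t)-u(s)|\le\int_s^t g_u\,dx$ holds with the pointwise Newtonian values of $u$. Summing over disjoint subintervals $(s_i,t_i)$ gives $\sum_i|u(t_i)-u(s_i)|\le\int_{\bigcup_i(s_i,t_i)}g_u\,dx$, which is small whenever $\sum_i(t_i-s_i)$ is small, because $g_u\in L^1([a,b])$; thus $u$ is absolutely continuous on $[a,b]$, and, $[a,b]$ being arbitrary, locally absolutely continuous on $I$.

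It remains to identify $g_u$. Local absolute continuity yields $u'$ a.e.\ together with the fundamental theorem of calculus, so by the remark preceding the proposition $|u'|$ is an upper gradient of $u$ and therefore $g_u\le|u'|$ a.e. For the reverse inequality I again use that every segment is a good curve: for a.e.\ $x\in I$ and small $h>0$,
\begin{equation*}
\Bigl|\frac{u(x+h)-u(x)}{h}\Bigr|
\le\frac1h\int_x^{x+h}g_u\,dx,
\end{equation*}
and letting $h\to0^+$ at a common Lebesgue point of $u'$ and $g_u$ gives $|u'(x)|\le g_u(x)$. Combining the two inequalities yields $g_u=|u'|$ a.e.

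The heart of the argument, and the main obstacle to a naive approach, is the positive lower bound for $\mathrm{Mod}_p(\{[s,t]\})$: it is what upgrades the validity of the upper gradient inequality from \p-almost every curve to \emph{every} single segment, which on $\R$ is exactly what absolute continuity requires. Everything else is the routine interplay between upper gradients, absolute continuity and the Lebesgue differentiation theorem.
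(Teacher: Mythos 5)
Your proof is correct, but it departs from the paper's precisely at the two places where the ``weak'' in \p-weak upper gradient matters. The paper never applies the upper gradient inequality to $g_u$ along individual segments: it works throughout with genuine upper gradients $g\in L^p(I,\mu)$, for which \eqref{ug-cond} holds on every curve by definition. Absolute continuity is then obtained by H\"older on each $(a_j,b_j)$ followed by H\"older for sums (yielding the bound $(\int_I g^p\,d\mu)^{1/p}\eps^{1-1/p}$), and the inequality $|u'|\le g_u$ is deduced by first proving $|u'|\le g$ a.e.\ for every upper gradient $g\in L^p(I,\mu)$ --- via Lebesgue points of $g^pw$ and of $w^{1/(1-p)}$ --- and then passing to $g_u$ through the Koskela--MacManus approximation of \p-weak upper gradients by true upper gradients converging in $L^p$ (hence a.e.\ along a subsequence), as recalled in Section~\ref{sect-prelim}. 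You instead prove the quantitative lemma $\mathrm{Mod}_p(\{[s,t]\})\ge\bigl(\int_s^t w^{1/(1-p)}\,dx\bigr)^{1-p}>0$, which upgrades the inequality for $g_u$ itself from \p-a.e.\ curve to \emph{every} nondegenerate segment; this lets you run both halves of the argument directly with $g_u$ (absolute continuity via $g_u\in L^1([a,b],dx)$, the derivative bound via Lebesgue differentiation of $g_u$ with respect to Lebesgue measure) and dispenses with the approximation step entirely. Both mechanisms rest on the same H\"older duality between $w$ and $w^{1/(1-p)}$, so neither is stronger in substance, but yours isolates the geometric reason the result holds (segments carry positive \p-modulus, so no exceptional family can contain one), while the paper's stays closer to the definitions and avoids modulus computations. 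Two small points: your remark that the modulus bound is ``the only place'' where integrability of $w^{1/(1-p)}$ enters is not quite accurate, since you also use it to get $g_u\in L^1([a,b],dx)$; and your appeal to minimality for $g_u\le|u'|$ implicitly requires $|u'|\in\Lploc(I,\mu)$, which only follows once $|u'|\le g_u$ is in hand --- the same mild ordering issue the paper glosses over by calling the converse inequality ``trivial'', and repaired in both treatments by noting at the end that $|u'|\le g_u$ forces $|u'|\in L^p(I,\mu)$.
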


In particular, the proposition applies if $w$ is locally \p-admissible 
on $\R$
(and thus a local $A_p$ weight, by Theorem~\ref{thm-p-adm-char-intro})
with $p>1$.
Note that $\Nploc(I,\mu)$ is a refinement of the standard
Sobolev space $W^{1,p}\loc(I,w)$ 
considered in
Heinonen--Kilpel\"ainen--Martio~\cite{HeKiMa}, 
see the discussion at the end of Section~\ref{sect-prelim}.

\begin{ack}
The first two authors were supported by the Swedish Research Council
grants 2016-03424 and 621-2014-3974, respectively. The third
author was partially supported by the NSF grant DMS-1500440. 
During part of 2017--18 the third author was a guest 
professor at Link\"oping University,
partially
funded by the Knut and Alice Wallenberg Foundation;
she
thanks them for their kind support and hospitality.
\end{ack}

\section{Metric spaces}
\label{sect-prelim}

We are primarily interested in measures and weights on $\R$, but
we will also need to use tools from first-order analysis on
metric spaces.
In this section we discuss the definitions used in metric spaces.
For proofs of the facts stated in this section 
we refer the reader to Bj\"orn--Bj\"orn~\cite{BBbook} and
Heinonen--Koskela--Shanmugalingam--Tyson~\cite{HKST}.

We assume throughout the paper that $1 \le p<\infty$ 
and that $X=(X,d,\mu)$ is a metric space equipped
with a metric $d$ and a positive complete  Borel  measure $\mu$ 
such that $0<\mu(B)<\infty$ for all balls $B \subset X$.
We assume throughout the paper that balls are open.
We let $B=B(x,r)=\{y \in X : d(x,y) < r\}$ denote the ball
with centre $x$ and radius $r>0$, and let $\la B=B(x,\la r)$.
In metric spaces it can happen that
balls with different centres and/or radii 
denote the same set;
we will however adopt the convention that a ball $B$ comes with
a predetermined centre $x_B$ and radius $r_B$. 

We primarily deal with $X$ being the real line $\R$, and in this case 
balls and bounded open intervals are the same objects. 
We will use both nomenclatures and notations.

We follow Heinonen and Koskela~\cite{HeKo98} in introducing
upper gradients as follows (in~\cite{HeKo98} they are referred to 
as very weak gradients).
A \emph{curve} is a continuous mapping from an interval.
We will only consider curves which are nonconstant, compact and
rectifiable,
i.e.\ of finite length.
A curve can thus be parameterized by its arc length $ds$.

\begin{deff} \label{deff-ug}
A nonnegative Borel function $g$ on $X$ is an \emph{upper gradient} 
of an extended real-valued function $u$
on $X$ if for all 
curves  
$\gamma: [0,l_{\gamma}] \to X$,
\begin{equation} \label{ug-cond}
        |u(\gamma(0)) - u(\gamma(l_{\gamma}))| \le \int_{\gamma} g\,ds,
\end{equation}
where we follow the convention that the left-hand side is considered to be $\infty$ 
whenever at least one of the terms therein is $\pm \infty$.
If $g$ is a nonnegative measurable function on $X$
and if (\ref{ug-cond}) holds for \p-almost every curve (see below), 
then $g$ is a \emph{\p-weak upper gradient} of~$u$. 
\end{deff}

We say that a property holds for \emph{\p-almost every curve}
if it fails only for a curve family $\Ga$ with \emph{zero \p-modulus}, 
i.e.\ there is a Borel function $0\le\rho\in L^p(X)$ such that 
$\int_\ga \rho\,ds=\infty$ for every curve $\ga\in\Ga$.
The \p-weak upper gradients were introduced in
Koskela--MacManus~\cite{KoMc}. It was also shown therein
that if $g \in \Lploc(X)$ is a \p-weak upper gradient of $u$,
then one can find a sequence $\{g_j\}_{j=1}^\infty$
of upper gradients of $u$ such that $\|g_j-g\|_{L^p(X)} \to 0$.

If $u$ has an upper gradient in $\Lploc(X)$, then
it has a \emph{minimal \p-weak upper gradient} $g_u \in \Lploc(X)$
in the sense that for every \p-weak upper gradient $g \in \Lploc(X)$ of $u$ we have
$g_u \le g$ a.e., see Shan\-mu\-ga\-lin\-gam~\cite{Sh-harm}.
The minimal \p-weak upper gradient is well defined
up to a set of measure zero.

Following Shanmugalingam~\cite{Sh-rev}, 
the \emph{Newtonian space} 
$\Np(X)=\Np(X,\mu)$ is
the collection of all measurable functions 
$u:X\to [-\infty,\infty]$, having an upper gradient in $L^p(X)$,
such that
\[
\|u\|_{\Np(X)} := \biggl( \int_X |u|^p \, d\mu 
               + \int_X g_u^p \, d\mu \biggr)^{1/p} <\infty.
\]

The space $\Np(X)/{\sim}$, where  $u \sim v$ if and only if $\|u-v\|_{\Np(X)}=0$,
is a Banach space and a lattice, see~\cite{Sh-rev}.
Contrary to the usual a.e.-defined Sobolev functions, 
the functions in $\Np(X)$ are defined everywhere (with values in 
$[-\infty,\infty]$), and
$u \sim v$ if and only if $u=v$ outside a set of \p-capacity zero,
which is important in Proposition~\ref{prop-ac}.

In this paper, the  letter $C$ will denote various positive
constants whose values may vary even within a line.

\section{Doubling and Poincar\'e inequalities}
\label{sect-doubl-PI}

We will discuss several notions of locally \p-admissible measures
on the real line, relations between them, and connections to 
local and global
Muckenhoupt $A_p$ weights. 
We give the following definitions.
Let $B_0=B(x_0,r_0)$.

The measure \emph{$\mu$ is doubling within $B_0$}
if there is $C>0$ (depending on $x_0$ and $r_0$)
such that 
\begin{equation}  \label{eq-deff-doubl}
\mu(2B)\le C \mu(B)
\end{equation}
for all balls $B \subset B_0$.

The
\emph{\p-Poincar\'e inequality holds within $B_0$} 
if there are constants $C>0$ and $\lambda \ge 1$ (depending on $x_0$ and $r_0$)
such that for all balls $B\subset B_0$, 
all integrable functions $u$ on $\la B$, and all upper gradients $g$ of $u$
(or equivalently all \p-weak upper gradients $g$ of $u$), 
\begin{equation}    \label{eq-def-PI-B}
 \vint_{B} |u-u_B| \,d\mu
       \le C r_B \biggl( \vint_{\la B} g^{p} \,d\mu \biggr)^{1/p},
\end{equation}
where $u_B=\vint_B u \,d\mu = \int_B u \,d\mu/\mu(B)$.
We also say that the 
\emph{Lipschitz \p-Poincar\'e inequality holds within $B_0$} 
if \eqref{eq-def-PI-B} holds for all
Lipschitz functions $u$ on $\la B$ with 
\[
   g(x)=\Lip u(x)=\limsup_{y \to x} \frac{|u(y)-u(x)|}{d(y,x)}.
\]

The measure $\mu$ is \emph{\textup{(}Lipschitz\/\textup{)} \p-admissible within $B_0$}
if it is doubling  and supports a (Lipschitz) \p-Poincar\'e
inequality within $B_0$.
Moreover, $w$ is an \emph{$A_p$ weight within $B_0$}
if
\begin{equation} \label{eq-Ap-def}
\vint_B w\, dx < C 
\begin{cases} 
       \biggl( \displaystyle\vint_{B} w^{1/(1-p)}\,dx \biggr)^{1-p},
         &1<p<\infty, \\
      \displaystyle \essinf_B w, &p=1,  \end{cases}  
\end{equation}
for all balls $B \subset B_0$.

Note that whether a property holds within a ball or not depends also 
on the ambient space $X$, since $2B\setm B_0$ or $\la B\setm B_0$ may be nonempty.
Unless said otherwise, the ambient space will be assumed to be $\R$
in this paper.

Furthermore, a property such as these above
is \emph{local} 
if for every $x \in X$ there 
are $R_x,C_x>0$ and $\la_x\ge 1$ such that it holds within
the ball $B(x,R_x)$ with constants $C_x$ and $\la_x$.
If it holds within every ball $B_0\subset X$, with $C$ and $\la$ depending on $B_0$,
it is  \emph{semilocal}.
If, moreover, the constants $C$ and $\la$ are  independent of $B_0$ 
then the property is \emph{global}.
If $d\mu=w\,dx$,
we will sometimes say that $w$ has a property if $\mu$ has it.

As $\Lip u$ is an upper gradient of $u$,
the Lipschitz \p-Poincar\'e inequality trivially follows
from the standard \p-Poincar\'e inequality \eqref{eq-def-PI-B}
within any ball.
If $\mu$ is globally doubling on  a complete metric space, then $\mu$
supports a global Lipschitz \p-Poincar\'e inequality if and only if
it supports the global standard \p-Poincar\'e inequality
\eqref{eq-def-PI-B},
by Keith~\cite[Theorem~2]{Keith} (or 
\cite[Theorem~8.4.2]{HKST}).
As we shall see, the corresponding equivalence is true
also in the local (and semilocal) case on $\R$.

The \p-Poincar\'e inequality (20.3) in 
Heinonen--Kilpel\"ainen--Martio~\cite[2nd ed.]{HeKiMa}
is weaker than the standard \p-Poincar\'e inequality
\eqref{eq-def-PI-B}
but stronger than the Lipschitz \p-Poincar\'e inequality
defined above. 
On $\R$, in view of 
\cite[Theorem~2]{Keith} (or 
 \cite[Theorem~8.4.2]{HKST})
and Theorem~\ref{thm-p-adm-char} below,
the \p-Poincar\'e inequality (20.3) in~\cite[2nd ed.]{HeKiMa}
is  equivalent to 
both  \p-Poincar\'e inequalities considered in this paper,
provided that $\mu$ is locally doubling.

It was shown in Haj\l asz--Koskela~\cite{HaKo-meets} that in
geodesic spaces with a doubling measure supporting a \p-Poincar\'e inequality, 
the dilation constant $\la$ in~\eqref{eq-def-PI-B} can be taken to be 1. 
This is true also under the local assumptions considered here
both for the standard and the Lipschitz \p-Poincar\'e inequality, cf.\ 
Bj\"orn--Bj\"orn~\cite[Theorem~5.1]{BBsemilocal}.
In the rest of this paper, we will 
mainly be concerned with the real line $\R$, and
in this case this can be deduced much more simply and generally.
In particular, the doubling assumption is not required.

\begin{prop} \label{prop-la=1}
Let $\mu$ be a measure on $\R$.
Assume that $I \subset \R$ is a bounded open interval such that
the \p-Poincar\'e inequality \eqref{eq-def-PI-B} holds for $B=I$
with dilation constant  $\la \ge 1$ and constant $\CPI$.
Then \eqref{eq-def-PI-B} also holds for $B=I$   with dilation 
constant $1$ 
and the same constant $\CPI$.
\end{prop}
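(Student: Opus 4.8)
The plan is to derive the $\la=1$ inequality from the assumed $\la$-inequality by a one-dimensional extension trick. Given an integrable function $u$ on $I$ and an upper gradient $g$ of $u$, I would extend $u$ to a function $\ut$ on $\la I$ that coincides with $u$ on $I$ and is constant on each of the two components of $\la I\setm I$, arranged so that $g\chi_I$ is an upper gradient of $\ut$ on $\la I$. Feeding the pair $(\ut,g\chi_I)$ into the assumed \p-Poincar\'e inequality \eqref{eq-def-PI-B} for $B=I$ with dilation $\la$ then finishes the argument: the left-hand side only sees $u$ on $I$ and is therefore unchanged, while on the right-hand side $\int_{\la I}(g\chi_I)^p\,d\mu=\int_I g^p\,d\mu$ and $\mu(\la I)\ge\mu(I)$, whence $\vint_{\la I}(g\chi_I)^p\,d\mu\le\vint_I g^p\,d\mu$. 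This monotonicity of the average under enlarging to a set of larger measure, with an unchanged numerator, is precisely what lets the larger dilation constant be absorbed without changing $\CPI$.

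Concretely, I would write $I=(a,b)$ and $\la I=(a',b')$ with $a'\le a<b\le b'$, and set $J_-=(a',a]$ and $J_+=[b,b')$. Define $\ut=u$ on $I$, $\ut\equiv c_-$ on $J_-$ and $\ut\equiv c_+$ on $J_+$, where $c_-=\lim_{y\to a^+}u(y)$ when that limit exists and $c_-=0$ otherwise, and symmetrically for $c_+$. Since $\la I$ is bounded, $\mu(\la I)<\infty$ and $\ut\in L^1(\la I,\mu)$. To check that $g\chi_I$ is an upper gradient of $\ut$, I first reduce to straight segments: on $\R$ a curve joining two points traverses the whole segment between them, so $\int_\ga g\chi_I\,ds$ dominates the integral of $g\chi_I$ over that segment, and it suffices to verify \eqref{ug-cond} for segments. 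For a segment with both endpoints in $I$ the inequality is the upper gradient property of $g$ for $u$; for a segment contained in $J_-$ or in $J_+$ it is trivial, as $\ut$ is constant and $g\chi_I=0$ there.

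The only substantive case is a segment crossing an endpoint, say from $x\in J_-$ to $y\in I$, where I must verify $|c_--u(y)|\le\int_a^y g\,dt$. Here I would invoke a dichotomy: either $\int_a^{y_0}g\,dt<\infty$ for some $y_0\in I$, and then $\int_a^y g\,dt\to0$ as $y\to a^+$, forcing $u$ to satisfy the Cauchy criterion near $a$ so that $c_-=\lim_{y\to a^+}u(y)$ exists and the inequality follows by passing to the limit in the upper gradient estimate $|u(y')-u(y)|\le\int_{y'}^y g\,dt$; or else $\int_a^y g\,dt=\infty$ for every $y>a$ and the inequality holds vacuously. Segments running across all of $I$, from $J_-$ to $J_+$, are treated identically using $|c_--c_+|\le\int_a^b g\,dt$.

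I expect the boundary analysis of the previous paragraph to be the only real difficulty: namely, checking that replacing $u$ by a constant outside $I$ does not spoil the upper gradient inequality at the endpoints of $I$ when $u$ has no one-sided limit there. The dichotomy resolves this, since a missing limit can occur only when $g$ fails to be Lebesgue-integrable (with respect to arc length) near the endpoint, and then the offending inequalities have an infinite right-hand side. The remaining ingredients are routine: the Borel measurability of $g\chi_I$, the integrability of $\ut$ on $\la I$, the averaging step using $\mu(\la I)\ge\mu(I)$, and finally the passage from genuine upper gradients to \p-weak ones, which is covered by the approximation result of Koskela--MacManus recalled in Section~\ref{sect-prelim}.
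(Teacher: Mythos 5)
Your proof is correct, but it takes a genuinely different route from the paper's. Both arguments turn on the same key mechanism that preserves $\CPI$: the extended gradient is supported in $I$, so its integral over $\la I$ equals its integral over $I$, while $\mu(\la I)\ge\mu(I)$ gives $\vint_{\la I}(g\chi_I)^p\,d\mu\le\vint_I g^p\,d\mu$. The difference lies in how the pair $(u,g)$ is extended to $\la I$. You extend $u$ itself across the endpoints of $I$ by its one-sided limits, and when a limit fails to exist you resolve the endpoint instances of \eqref{ug-cond} by the dichotomy that a missing limit forces $\int_a^y g\,dt=\infty$, making those inequalities vacuous; this gives the conclusion in one step, with no limiting procedure. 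The paper avoids all boundary-limit analysis instead: normalizing $I=(-1,1)$, it truncates at \emph{interior} points, setting $u_\eps=u$ on $[-(1-\eps),1-\eps]$ and constant equal to the attained values $u(\pm(1-\eps))$ beyond, with $g_\eps=g\chi_{\{|x|\le 1-\eps\}}$; since the gluing happens strictly inside $I$, the upper gradient property of $(u_\eps,g_\eps)$ is immediate, and the inequality for $u$ follows by letting $\eps\to0$ --- at the cost of a dominated convergence argument (hence a preliminary reduction to bounded $u$) and an appeal to \cite[Proposition~4.13]{BBbook} to recover unbounded integrable $u$. So your version is more self-contained and direct but must confront the endpoint behaviour head-on, which your dichotomy handles correctly (including segments running from $J_-$ to $J_+$, and implicitly the finiteness of $c_\pm$, since finiteness of $\int_a^y g\,dt$ forces $u$ to be finite and Cauchy near the endpoint); the remaining ingredients you list --- reduction to segments on $\R$, measurability of $g\chi_I$, integrability of $\ut$ from $\mu(\la I)<\infty$, and the passage from genuine to \p-weak upper gradients via Koskela--MacManus --- are all sound and match the paper's standing conventions.
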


The proof can be easily modified for Lipschitz Poincar\'e inequalities,
and also for so-called $(q,p)$-Poincar\'e inequalities.
We assume implicitly, as always in this paper, that balls have
finite and positive measure.

\begin{proof}
We may assume that $I=(-1,1)$.
Let $u$ be a bounded integrable function on $I$ and $g$ be an upper gradient
of $u$ on $I$.
For $0 < \eps<1$, we let
\[
    u_\eps(x)=\begin{cases}
         u(x), & \text{if } |x| \le 1-\eps, \\
         u(\pm(1-\eps)), & \text{if } \pm x \ge 1-\eps, 
      \end{cases}
      \quad \text{and} \quad
    g_\eps(x)=\begin{cases}
         g(x), & \text{if } |x| \le 1-\eps, \\
         0, & \text{if } |x| > 1-\eps.
      \end{cases}
\]
It is easy to see that $g_\eps$ is an upper gradient of $u_\eps$.
By dominated convergence, as $u$ is bounded, 
\begin{align*}
 \vint_{I} |u-u_I| \,d\mu 
    & = \lim_{\eps \to 0} \vint_{I} |u_\eps-(u_\eps)_I| \,d\mu  
 \le \lim_{\eps \to 0}\CPI  \biggl(\vint_{\la I} g_\eps^{p} \,d\mu \biggr)^{1/p}
  \\ &
       \le \lim_{\eps \to 0}\CPI  \biggl(\vint_{I} g_\eps^{p} \,d\mu \biggr)^{1/p}
       \le \CPI  \biggl(\vint_{I} g^{p} \,d\mu \biggr)^{1/p}.
\end{align*}
By \cite[Proposition~4.13]{BBbook}, the \p-Poincar\'e
inequality holds also for unbounded integrable $u$.
\end{proof}

\section{Proof of Theorem~\ref{thm-p-adm-char-intro}}

The aim of this section is to show the following
characterization of (semi)locally \p-admissible measures by means of
$A_p$ weights.
The corresponding global characterization was given in 
Bj\"orn--Buckley--Keith~\cite[Theorem~2]{BBK-Ap}.

\begin{thm} \label{thm-p-adm-char}
Let $\mu$ be a measure on $\R$.
Then the following are equivalent\/\textup{:}
\begin{enumerate}
\item \label{c-Lip-loc}
$\mu$ is locally Lipschitz \p-admissible\/\textup{;}
\item \label{c-loc}
$\mu$ is locally \p-admissible\/\textup{;}
\item \label{c-semi}
$\mu$ is semilocally \p-admissible\/\textup{;}
\item \label{c-loc-Ap}
$d\mu=w\,dx$, where $w$ is a local $A_p$ weight\/\textup{;}
\item \label{c-semi-Ap}
$d\mu=w\,dx$, where $w$ is a semilocal $A_p$ weight\/\textup{;}
\item \label{c-glob-Ap}
$d\mu=w\,dx$, and for each bounded interval  $I\subset \R$ there is 
a global $A_p$ weight $\wt$ on $\R$ such that $\wt=w$ on $I$.
\end{enumerate}
\end{thm}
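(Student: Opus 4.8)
The plan is to close a cycle of implications through the six statements, with only a few arrows carrying real content. Among the soft implications, \ref{c-loc}$\Rightarrow$\ref{c-Lip-loc} is immediate since $\Lip u$ is an upper gradient, so the standard \p-Poincar\'e inequality forces its Lipschitz version; \ref{c-semi}$\Rightarrow$\ref{c-loc} and \ref{c-semi-Ap}$\Rightarrow$\ref{c-loc-Ap} hold because anything valid within every ball is valid within a ball about each point; and \ref{c-glob-Ap}$\Rightarrow$\ref{c-semi-Ap} holds because, if a global $A_p$ weight $\wt$ agrees with $w$ on a bounded interval $I$, then for every ball $B\subset I$ the relevant averages of $w$ coincide with those of $\wt$, so the $A_p$ inequality \eqref{eq-Ap-def} holds within any $B_0\subset I$ with the global constant of the corresponding $\wt$. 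I would quote \ref{c-loc}$\Rightarrow$\ref{c-semi} from Bj\"orn--Bj\"orn~\cite{BBsemilocal}, and prove \ref{c-loc-Ap}$\Rightarrow$\ref{c-loc} as the local form of the classical fact that $A_p$ weights are \p-admissible: within $B_0$ the inequality \eqref{eq-Ap-def} yields both doubling \eqref{eq-deff-doubl} and, by a one-dimensional weighted computation, the \p-Poincar\'e inequality \eqref{eq-def-PI-B} for all $B\subset B_0$.

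The analytic heart is the passage from the weakest admissibility \ref{c-Lip-loc} to the weight conditions, and I would organize it around absolute continuity. First I would show that \ref{c-Lip-loc} forces $d\mu=w\,dx$. Fix $x_0$ and a ball $B_0=B(x_0,R)$ within which $\mu$ is Lipschitz \p-admissible. I would transplant $\mu$ from a slightly smaller interval onto the circle $\Sphere^1$, arranging the gluing inside $B_0$ so that the resulting measure is doubling and supports a Lipschitz \p-Poincar\'e inequality at all scales of the compact circle. Applying the flattening procedure of Durand-Cartagena--Li~\cite{Esti-Xining} at a puncture then produces a measure $\muh$ on $\R$ that is globally doubling and supports a global Lipschitz \p-Poincar\'e inequality, since flattening preserves both. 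As the flattened space is complete and globally doubling, Keith's theorem~\cite[Theorem~2]{Keith} upgrades this to a genuine global \p-Poincar\'e inequality, so $\muh$ is globally $q$-admissible for some $q$; Bj\"orn--Buckley--Keith~\cite[Theorem~2]{BBK-Ap} then forces $\muh$ to be absolutely continuous, and unwinding the flattening and the transplantation gives absolute continuity of $\mu$ on $B_0$. Since $x_0$ is arbitrary, $d\mu=w\,dx$ on all of $\R$.

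With absolute continuity in hand I would argue directly on the line to obtain the $A_p$ conditions, using throughout that for a locally absolutely continuous function $|u'|$ is an upper gradient. To prove \ref{c-Lip-loc}$\Rightarrow$\ref{c-loc-Ap}, fix a subinterval $B=(a,b)\subset B_0$ and test the Lipschitz \p-Poincar\'e inequality \eqref{eq-def-PI-B} with a Lipschitz truncation of $u(x)=\int_a^x w(t)^{1/(1-p)}\,dt$ when $p>1$; the identity $|u'|^p w=w^{1/(1-p)}$ collapses the gradient average to $\vint_B w^{1/(1-p)}\,dx$, and a lower bound for the oscillation average $\vint_B|u-u_B|\,d\mu$ of the monotone function $u$ rearranges into \eqref{eq-Ap-def}, the case $p=1$ being handled separately through $\essinf$. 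The delicate points are extracting (rather than assuming) the local integrability of $w^{1/(1-p)}$ and the Lipschitz regularization needed to legitimately invoke the Lipschitz inequality. Running the same computation within an arbitrary ball $B_0$, where standard \p-admissibility is now available via \ref{c-Lip-loc}$\Rightarrow$\ref{c-loc-Ap}$\Rightarrow$\ref{c-loc}$\Rightarrow$\ref{c-semi}, yields \ref{c-semi}$\Rightarrow$\ref{c-semi-Ap} and hence semilocal $A_p$.

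Finally I would prove \ref{c-semi-Ap}$\Rightarrow$\ref{c-glob-Ap} by extension. Given a bounded interval $I$, semilocal $A_p$ provides \eqref{eq-Ap-def} within some fixed interval $I'$ with $\overline I\subset I'$, controlling all subintervals of $I'$ with one constant. I would then extend $w|_I$ to a global weight $\wt$ by even reflection across the endpoints of $I$ together with tapering to a positive constant away from $I$, and verify the global $A_p$ inequality for every ball: those inside $I$ by construction, and those meeting the complement because $\wt$ is there trapped between two positive constants. This closes the cycle and yields the equivalence of all six statements. The step I expect to be the genuine obstacle is the first stage of the heart, namely organizing the transplantation to $\Sphere^1$ so that \emph{local} admissibility becomes \emph{global} admissibility on the circle, and controlling the flattening precisely enough that \cite{Esti-Xining}, \cite{Keith} and \cite{BBK-Ap} apply; once absolute continuity is secured, the remaining one-dimensional estimates, though technical, are comparatively routine.
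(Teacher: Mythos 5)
Your overall architecture matches the paper closely: the circle transplantation plus the Durand-Cartagena--Li flattening and \cite[Theorem~2]{BBK-Ap} to get absolute continuity (using only that the resulting $q$ may exceed $p$), a direct one-dimensional test to extract $A_p$, reflection to pass to global $A_p$ weights, and \cite{BBsemilocal} for local-to-semilocal. But there is a genuine gap in your key quantitative step \ref{c-Lip-loc}$\Rightarrow$\ref{c-loc-Ap}: you test the Poincar\'e inequality \eqref{eq-def-PI-B} on the interval $B=(a,b)$ itself and claim that a lower bound for $\vint_B|u-u_B|\,d\mu$ for the monotone test function ``rearranges into \eqref{eq-Ap-def}'' for that same $B$. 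This is provably false: for monotone $u$ the oscillation average against $\mu$ can be far smaller than the total increment $u(b)-u(a)=\int_B w^{1/(1-p)}\,dx$ when $\mu$ puts little mass near the endpoints, which is exactly where $w^{1/(1-p)}$ may concentrate. The paper's Example~\ref{ex-not-extension} certifies this: $w(x)=x^\alpha$ on $[0,1]$ supports the $1$-Poincar\'e inequality with uniform constant $2$ and $\la=1$ on \emph{every} subinterval of $(0,1)$, yet fails the $A_p$ condition within $(0,1)$ whenever $p<1+\alpha$. So \p-admissibility within $I_0$ does \emph{not} give $A_p$ within $I_0$; one only gets $A_p$ within $\theta^{-1}I_0$ for $\theta>1$ (Theorem~\ref{thm-local-Ap}), and the shrinking is essential. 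The repair is the paper's device: test on the \emph{enlarged} interval $\theta I$, note that the test function vanishes on the left flank $I_L$ and is constant on the right flank $I_R$ of $\theta I\setminus I$, and use doubling to make $\mu(I_L)$, $\mu(I_R)$ comparable to $\mu(\theta I)$; this is what produces the oscillation lower bound $\vint_{\theta I}|u-u_{\theta I}|\,d\mu\ge C\,u(x+r)$ that your version lacks. Since the $A_p$ conclusion is only needed on slightly smaller intervals, the local and semilocal statements survive, but as written your rearrangement step fails.

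Three secondary points. First, your ``Lipschitz truncation'' of $u(x)=\int_a^x w(t)^{1/(1-p)}\,dt$ is undefined when $w^{1/(1-p)}\notin L^1\loc$ (the function is then identically infinite past some point); the paper's regularization $u(y)=\int_{-\infty}^y w\chi_I(w+\eps)^{-p/(p-1)}\,dt$, which is automatically Lipschitz with $\Lip u\le(w+\eps)^{-1/(p-1)}\chi_I$, followed by monotone convergence as $\eps\to0$, is the needed fix and simultaneously \emph{extracts} the integrability of $w^{1/(1-p)}$, as you correctly anticipate must happen. Second, in \ref{c-loc-Ap}$\Rightarrow$\ref{c-loc} your direct derivation of doubling within $B_0$ from \eqref{eq-Ap-def} within $B_0$ needs shrinking, since for $B\subset B_0$ the ball $2B$ in \eqref{eq-deff-doubl} may leave $B_0$, where $A_p$ gives no control of $\mu$; the paper handles this by extending via Lemma~\ref{lem-Ap-refl} and concluding admissibility only within $\tfrac12 B_0$ (Corollary~\ref{cor-loc-Ap-imp-loc-adm}) --- harmless for the local statement, but it must be said. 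Third, for \ref{c-semi-Ap}$\Rightarrow$\ref{c-glob-Ap} your even-reflection-plus-taper-to-a-constant requires verifying $A_p$ for balls straddling the junction between the reflected weight and the constant region, which is not automatic when $w$ degenerates there; the paper's purely periodic reflection (Lemma~\ref{lem-Ap-refl}) avoids any junction and yields the global $A_p$ bound by an elementary two-case estimate on $|I|\le M$ versus $|I|>M$, and I would recommend adopting it.
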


Our first goal is to justify the implications 
\ref{c-semi-Ap} $\imp$ \ref{c-glob-Ap} 
and  \ref{c-loc-Ap} $\imp$ \ref{c-loc}.
This will be implied by the following lemma and its corollary.

\begin{lem} \label{lem-Ap-refl}
Assume that $w$
satisfies the $A_p$ condition~\eqref{eq-Ap-def} 
within the interval $I_0=(0,M)$.
Then the periodically reflected weight
\[
\wh(x) = \begin{cases}
       w(2kM-x), & x\in[(2k-1)M,2kM], \\
       w(x-2kM), & x\in[2kM,(2k+1)M],
     \end{cases}   \quad k \in \Z,
\]
is a global $A_p$ weight on $\R$.
\end{lem}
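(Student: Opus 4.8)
The plan is to verify the $A_p$ condition~\eqref{eq-Ap-def} for $\wh$ on every bounded interval $B\subset\R$ with a single constant. The starting point is the structure of $\wh$: it is $2M$-periodic and symmetric about each \emph{reflection point} $kM$, $k\in\Z$, in the sense that $\wh(kM+y)=\wh(kM-y)$ for $0\le y\le M$, while on each \emph{monotone piece} $(kM,(k+1)M)$ the function $\wh$ is a translated copy either of $w$ or of its reflection $x\mapsto w(M-x)$ on $I_0$. Since the product $\vint_B\wh\,(\vint_B\wh^{1/(1-p)})^{p-1}$ is invariant under translating and reflecting the variable, and translating $B$ by $2M$ leaves $\wh$ unchanged, I would classify intervals by the number $N$ of reflection points lying in the open interval $B$ and treat the cases $N=0$, $N=1$ and $N\ge2$ separately. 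I first record that the hypothesis, applied to subintervals of $I_0$ together with a monotone-convergence argument as $B\uparrow I_0$, gives $w,w^{1/(1-p)}\in L^1(I_0)$ and that the $A_p$ product for $B=I_0$ itself is bounded by $C$.

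If $N=0$, then $B$ lies in a single monotone piece, so up to a translation and possibly a reflection $\wh|_B$ coincides with $w$ on a subinterval of $I_0$, and~\eqref{eq-Ap-def} for $B$ follows directly from the hypothesis with the same constant $C$.

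The case $N=1$ is the heart of the argument and uses folding. Here $B=(p-t,p+s)$ with $p=kM$ and $0<t,s\le M$; reflecting if necessary I may assume $t\le s$, so that $B^+:=(p,p+s)\subset(kM,(k+1)M)$ sits in one monotone piece. Symmetry about $p$ gives $\int_{p-t}^{p}\wh=\int_{p}^{p+t}\wh\le\int_{B^+}\wh$, whence $\int_{B^+}\wh\le\int_B\wh\le2\int_{B^+}\wh$, and the same holds with $\wh$ replaced by $\wh^{1/(1-p)}$. As $|B^+|\le|B|\le2|B^+|$, the averages over $B$ and over $B^+$ are comparable up to a factor $2$. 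Applying the $N=0$ case to $B^+$ and tracking the constants (recalling that $1-p<0$ reverses the relevant inequality) yields~\eqref{eq-Ap-def} for $B$ with constant $2^pC$; for $p=1$ the same folding gives $\essinf_B\wh=\essinf_{B^+}\wh$ and the analogous conclusion.

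Finally, if $N\ge2$ the interval $B$ contains at least one full monotone piece, and folding onto $I_0$ produces a multiplicity function $m(y)=\#\{x\in B:\text{$x$ folds to $y$}\}$ on $I_0$ taking only the values $N-1,N,N+1$; hence $m$ is within a factor $\tfrac{N+1}{N-1}\le3$ of being constant. Writing $\vint_B\wh=\int_{I_0}w\,m\,dy\big/\int_{I_0}m\,dy$ shows that $\vint_B\wh$ lies within a factor $3$ of $A:=\vint_{I_0}w$, and likewise $\vint_B\wh^{1/(1-p)}$ within a factor $3$ of $\vint_{I_0}w^{1/(1-p)}$. Consequently the $A_p$ product over $B$ is at most $3^p$ times that over $I_0$, hence at most $3^pC$; for $p=1$ one argues analogously using $\essinf_B\wh=\essinf_{I_0}w$. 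Taking the maximum of the constants from the three cases gives a uniform $A_p$ constant for $\wh$ on all of $\R$. The only genuine difficulty, and the place needing care, is obtaining scale-uniform control for intervals meeting the reflection points; this is handled by the folding comparison when $N=1$ and by the bounded-multiplicity (periodicity) estimate when $N\ge2$.
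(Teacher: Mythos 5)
Your proof is correct and takes essentially the same route as the paper's: intervals meeting at most one reflection point are handled by folding onto the larger side using the symmetry of $\wh$ about each point $kM$ (the paper's case $|I|\le M$, which gives the same constant $2^pC$), and longer intervals by a periodicity count (the paper's case $|I|>M$, where it places $I\subset(0,nM)$ with $nM<3|I|$ and bounds both integrals by $n$ times those over $I_0$). Your multiplicity-function bookkeeping with $N-1\le m\le N+1$ is just a slightly sharper form of that same counting argument.
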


For weights on $\R^n$, similar reflection results were obtained in 
Bj\"orn~\cite[Proposition~10.5]{ABBergman}
and Rychkov~\cite[Lemma~1.1]{Ryckhov}, but since the proof on $\R$
becomes especially simple, we provide it here for the 
reader's convenience.

\begin{proof}
Let $I\subset\R$ be a bounded open interval. 
If $|I|\le M$ then $I$ intersects
at most two copies $I_k:=(kM,(k+1)M)$ and $I_{k+1}$ of $I_0$.
We can assume that $k=0$ and that $|I\cap I_0| \ge |I\cap I_{1}|$.
Using the $A_p$ condition~\eqref{eq-Ap-def} for $w$ on $B=I\cap I_0$,
we see that for $p>1$,
\begin{align*}
\int_I \wh\,dx \biggl( \int_I \wh^{1/(1-p)}\,dx \biggr)^{p-1} 
&\le 2\int_{I\cap I_0} w\,dx \biggl( 2 \int_{I\cap I_0} w^{1/(1-p)}\,dx \biggr)^{p-1} \\
&\le 2^p C |I\cap I_0|^p \le 2^p C |I|^p.
\end{align*}
On the other hand, if $|I|>M$, then by translating 
$I$,
we can assume that 
$I \subset (0,nM)$ with $nM < 3 |I|$.
Then
\begin{align*}
\int_I \wh\,dx \biggl( \int_I \wh^{1/(1-p)}\,dx \biggr)^{p-1} 
&\le n\int_{I_0} w\,dx \biggl( n \int_{I_0} w^{1/(1-p)}\,dx \biggr)^{p-1} \\
&\le n^p C |I_0|^p \le C |I|^p.
\end{align*}
After division by $|I|^p$,  we obtain~\eqref{eq-Ap-def} 
for $\wh$ on both types of $I$.

For $p=1$, the proof is similar using that
$\essinf_I w=\essinf_{I\cap I_0} w$.
\end{proof}

\begin{cor}  \label{cor-loc-Ap-imp-loc-adm}
If $d\mu=w\,dx$ satisfies the $A_p$ condition~\eqref{eq-Ap-def} within
a bounded
 open interval $I_0$ then $\mu$ is \p-admissible within $\tfrac12 I_0$.
\end{cor}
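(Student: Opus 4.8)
The plan is to transplant the \emph{global} theory onto $\tfrac12 I_0$ by means of the reflected weight from Lemma~\ref{lem-Ap-refl}. Since translations preserve Lebesgue measure, the $A_p$ condition, doubling and Poincar\'e inequalities, I first assume without loss of generality that $I_0=(0,M)$, so that its centre is $x_0=M/2$ and its radius is $r_0=M/2$. Lemma~\ref{lem-Ap-refl} then produces the periodically reflected weight $\wh$, which is a global $A_p$ weight on $\R$; writing $d\muh=\wh\,dx$ and reading off the $k=0$ case of the reflection formula shows that $\wh=w$ on $[0,M]$, and hence $\muh=\mu$ on $I_0$. As a global $A_p$ weight is globally \p-admissible (see \cite[Theorem~15.21]{HeKiMa}, as recalled in the introduction), the measure $\muh$ is globally doubling and supports a global \p-Poincar\'e inequality, say with dilation constant $\la$.

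Next I record the one geometric fact that makes everything local: if $B=B(y,s)\subset\tfrac12 I_0=B(M/2,M/4)$, then $|y-M/2|+s\le M/4$, whence $s\le M/4$ and
\[
|y-M/2|+2s\le \tfrac{M}{4}+s\le\tfrac{M}{2}=r_0,
\]
so that $2B\subset I_0$. In particular both $B$ and $2B$ lie in the region where $\mu$ and $\muh$ agree.

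The doubling of $\mu$ within $\tfrac12 I_0$ is now immediate: for $B\subset\tfrac12 I_0$ we have $\muh=\mu$ on $2B\supset B$, so the global doubling of $\muh$ gives $\mu(2B)=\muh(2B)\le C\muh(B)=C\mu(B)$. For the Poincar\'e inequality I would first apply Proposition~\ref{prop-la=1} to $\muh$ on each ball (viewed as the interval $I$) in order to replace the global dilation $\la$ by $1$, so that $\muh$ supports a \p-Poincar\'e inequality with $\la=1$. Then, for $B\subset\tfrac12 I_0\subset I_0$, any integrable $u$ on $B$ and any upper gradient $g$ of $u$, the global inequality for $\muh$ on the ball $B$ (now with $\la B=B\subset I_0$) reads
\[
\vint_B|u-u_B|\,d\muh\le C r_B\biggl(\vint_B g^p\,d\muh\biggr)^{1/p};
\]
because $\muh=\mu$ throughout $B$, the average $u_B$ and all the integrals are unchanged when $d\muh$ is replaced by $d\mu$, giving \eqref{eq-def-PI-B} for $\mu$ on $B$ with dilation $1$. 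Hence $\mu$ supports a \p-Poincar\'e inequality within $\tfrac12 I_0$, and together with the doubling this shows that $\mu$ is \p-admissible within $\tfrac12 I_0$.

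The real content here is the bookkeeping of the localization, not any hard estimate, since everything is bootstrapped from the global characterization. The factor $\tfrac12$ is dictated by the doubling step, where one needs the \emph{enlarged} ball $2B$ to remain inside $I_0$, and the reduction to $\la=1$ via Proposition~\ref{prop-la=1} is precisely what confines the Poincar\'e integrals to $I_0$, where $\mu=\muh$. The only point demanding a little care is that upper gradients form a metric notion shared by $(\R,\mu)$ and $(\R,\muh)$, so the same $g$ is admissible in both inequalities; once this is noted, the transfer from $\muh$ to $\mu$ is purely a matter of the two measures coinciding on the balls in play.
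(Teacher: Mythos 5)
Your proof is correct and follows essentially the same route as the paper's: extend $w$ by Lemma~\ref{lem-Ap-refl} to a global $A_p$ weight $\wh$, invoke the global \p-admissibility of $A_p$ weights, use $2B\subset I_0$ (which forces the factor $\tfrac12$) to transfer doubling from $\muh$ to $\mu$, and reduce the dilation to $\la=1$ via Proposition~\ref{prop-la=1} so that the Poincar\'e integrals stay where $\mu=\muh$. The only nuance: for $p=1$ the global admissibility needs \cite[Theorem~4]{JBFennAnn} rather than \cite[Theorem~15.21]{HeKiMa}, as the paper's proof notes.
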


For the converse implication see Theorem~\ref{thm-local-Ap}.
In view of Proposition~\ref{prop-la=1},
we assume that the dilation constant $\la=1$ from now on.

\begin{proof}
The extension $\wh$ of $w$, provided by Lemma~\ref{lem-Ap-refl},
is a global $A_p$ weight and thus globally \p-admissible on $\R$,
by Theorem~15.21 in Heinonen--Kilpel\"ainen--Martio~\cite{HeKiMa}
(for $p>1$) and Theorem~4 in Bj\"orn~\cite{JBFennAnn} (for $p \ge 1$).
It then follows that for every interval $I\subset \tfrac12 I_0$,
we have $2I\subset I_0$ and thus 
\[
\mu(2I)=\muh(2I)\le C\muh(I)= C\mu(I),
\] 
by the doubling condition for $d\muh=\wh\,dx$ within $I_0$. 
Moreover, the \p-Poincar\'e inequality~\eqref{eq-def-PI-B}
holds for $\wh$ (and thus for $w$) on $I$.
\end{proof}

Most of the rest of this section is
devoted to showing  that \ref{c-Lip-loc} $\imp$ \ref{c-loc-Ap},
and simultaneously that \ref{c-semi} $\imp$ \ref{c-semi-Ap},
in Theorem~\ref{thm-p-adm-char}.

Globally \p-admissible measures on $\R$ are known to be global $A_p$
weights and, in particular, absolutely continuous with respect to
the Lebesgue measure, cf.\ Bj\"orn--Buckley--Keith~\cite{BBK-Ap}.
Next, we obtain a similar characterization for locally 
Lipschitz \p-admissible measures. 
This will be done using reflections and a flattening argument from 
Durand-Cartagena--Li~\cite{Esti-Xining}.
Verifying \p-admissibility for reflected measures
turns out to be more
involved than for the $A_p$ condition in Lemma~\ref{lem-Ap-refl}.

\begin{lem}  \label{lem-adm-on-circle}
Assume that $\mu$ is Lipschitz
\p-admissible within the interval $(-M,2M)\subset\R$
and define the measure $\muh$ on $[-M,M]$ by
\[
\muh(A) = \mu(A\cap[0,M]) + \mu(-A\cap[0,M]),
\quad \text{where} \quad -A=\{x\in\R:-x\in A\}.
\]
Let the metric space $(X,d,\muh)$ be obtained by identifying the endpoints
$\pm M$ of the interval $[-M,M]$ with each other 
and inheriting the length metric from the circle of radius $M/\pi$.
Then $\muh$ is doubling and supports a \p-Poincar\'e inequality on $X$.
\end{lem}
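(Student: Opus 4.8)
The plan is to transfer both the doubling property and the Poincar\'e inequality from $\mu$ on $(-M,2M)$ to $\muh$ on the circle $X$, exploiting that $\muh$ is the \emph{even reflection} of $\mu|_{[0,M]}$ about the two points of $X$ where the reflection/gluing takes place: the image of $0$, and the identified point $M\sim-M$. Throughout I would first apply Proposition~\ref{prop-la=1} to $\mu$ in order to arrange dilation constant $1$ in \eqref{eq-def-PI-B}; the corresponding reduction on $X$ is harmless since every arc of radius $<M$ lifts isometrically to an interval.

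For doubling, note that away from the two reflection points $\muh$ agrees, up to the isometry $x\mapsto-x$, with $\mu$ on $(0,M)\subset(-M,2M)$, so \eqref{eq-deff-doubl} is inherited directly there (and on the reflected sheet by symmetry). At a reflection point the symmetry of $\muh$ reduces the doubling condition to a one-sided estimate for $\mu$, namely $\mu((0,2r))\le C\mu((0,r))$ and $\mu((M-2r,M))\le C\mu((M-r,M))$. Each follows from the doubling of $\mu$ within $(-M,2M)$ applied to the balls $B(r/2,r/2)$ and $B(M-r/2,r/2)$, whose doubles protrude slightly beyond $0$ and $M$ respectively; a short iteration then upgrades the growth factor from $\tfrac32$ to $2$. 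Balls of radius comparable to $M$ are trivial since $0<\muh(X)<\infty$.

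For the Poincar\'e inequality, an arc $B$ contained in one open sheet inherits \eqref{eq-def-PI-B} for $\muh$ directly from the Lipschitz \p-Poincar\'e inequality for $\mu$ on $B\subset(-M,2M)$ (and its reflection). The essential case is an arc straddling a reflection point $x_0\in\{0,M\}$. Here I would split a Lipschitz $u$ into its even and odd parts about $x_0$, $u=u_e+u_o$ with $u_o(x_0)=0$. Since $\muh$ is symmetric about $x_0$, the $\muh$-average of $u$ equals that of $u_e$, and by folding the contribution of $u_e$ reduces to the one-sided \p-Poincar\'e inequality for $\mu$ on a subinterval of $(0,M)$, which is available. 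The main obstacle is the odd part, whose contribution is $2\int|u_o|\,d\muh$ over the relevant half-arc: controlling it amounts to a Hardy-type inequality at $x_0$ for a function vanishing there, and this cannot be extracted from the one-sided data alone, since an admissible weight on an interval need not be an $A_p$ weight there. My proposal for this step is to zero-extend $u_o$ across $x_0$ into the genuine interval (into $(-M,0)$ when $x_0=0$, and into $(M,2M)$ when $x_0=M$, the extension having vanishing upper gradient on the added side) and then apply the genuine two-sided \p-Poincar\'e inequality for $\mu$ on a ball centred at $x_0$; together with the doubling of $\mu$ this should bound $\int|u_o|\,d\mu$ by the required gradient integral. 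This is exactly where the room beyond $[0,M]$ on \emph{both} sides is indispensable, matching the phenomenon recorded elsewhere in the paper, and I expect it to be the most delicate part, demanding a careful choice of extension interval and bookkeeping of scales.

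Finally, arcs covering essentially all of $X$ reduce to the previous cases by a standard chaining argument using the doubling of $\muh$ and the connectedness of $X$. Having obtained doubling together with the Lipschitz \p-Poincar\'e inequality on the complete space $X$, I would conclude the standard \p-Poincar\'e inequality \eqref{eq-def-PI-B} via Keith's theorem \cite[Theorem~2]{Keith} (equivalently \cite[Theorem~8.4.2]{HKST}).
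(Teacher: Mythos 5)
Your proposal is correct and, at its core, takes the same route as the paper's proof: one-sided doubling estimates at the reflection points obtained from balls protruding beyond $0$ and $M$ (the paper phrases this as the comparability of $\mu((0,a))$ and $\mu((-a,0))$, forced by doubling within the larger interval $(-M,2M)$); reduction to small arcs plus globalization on the compact space (where you sketch a chaining argument, the paper cites \cite[Proposition~1.2 and Theorem~1.3]{BBsemilocal}); Keith's theorem \cite[Theorem~2]{Keith} (or \cite[Theorem~8.4.2]{HKST}) on the complete space $X$ to reduce to the Lipschitz \p-Poincar\'e inequality; and, for arcs straddling a reflection point, the key mechanism of zero-extending a function vanishing at that point into the genuine two-sided interval and applying the two-sided Poincar\'e inequality for $\mu$ there. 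This last step is exactly what the paper does, via the proof of \cite[Lemma~2.1]{KiSh01} on $I'=(-r,x+r)$, exploiting that the extension vanishes on $(-r,0)$, whose $\mu$-measure is comparable to $\mu(I')$; you also correctly identify this as the step where admissibility within $(-M,2M)$, rather than within $(0,M)$ alone, is indispensable (cf.\ Example~\ref{ex-not-extension}).

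The only substantive divergence is your even/odd decomposition $u=u_e+u_o$ about $x_0$, where the paper instead normalizes $u(x_0)=0$ and splits $u$ by restriction to the two half-arcs, zero-extending each piece ($\uhat=u\chi_{(0,x+r)}$ and its mirror $\ut$), so that no separate treatment of an even part is needed. Your variant works but has a wrinkle you should repair: an arc $B$ straddling $x_0$ need not be symmetric about $x_0$, and then your claim that the $\muh$-average of $u$ equals that of $u_e$ fails, since $\int_B u_o\,d\muh\ne0$ in general. The standard fix --- and the paper's device --- is to compare the mean with the point value, $\vint_B|u-u_{B,\muh}|\,d\muh\le 2\vint_B|u-u(x_0)|\,d\muh$ (see \cite[Lemma~4.17]{BBbook}), after which $u-u(x_0)$ vanishes at $x_0$ and your zero-extension argument applies verbatim; alternatively, enlarge $B$ to a symmetric arc of comparable measure using the already established doubling of $\muh$. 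With that adjustment (and noting that arcs of radius comparable to $M$ are not quite ``trivial'' but follow from the globalization step), your outline fills in to a complete proof along the paper's lines.
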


Because of the local doubling property, $\mu$ and thus $\muh$ is nonatomic.
Note that the doubling and Poincar\'e constants for $\muh$ depend on
those for $\mu$ within $(-M,2M)$. 
Example~\ref{ex-not-extension} below shows that it is not enough
to assume that $\mu$ is \p-admissible within the interval $(0,M)$,
even though $\muh$ only depends on $\mu|_{[0,M]}$.

Equivalently, $(X,d,\muh)$ can be obtained by letting
\[
\muh(A) = \mu(A\cap[0,M]) + \mu((M-A)\cap[0,M]),
\]
where $A\subset [0,2M]$ and $M-A=\{x\in\R:M-x\in A\}$,
and identifying the points $0$ and $2M$.
Thus, the reflection points $0$ and $M$ play symmetric roles.

\begin{proof}
We begin by proving the doubling condition~\eqref{eq-deff-doubl} and 
the \p-Poincar\'e inequality~\eqref{eq-def-PI-B} for 
$\muh$ and $I=(x-r,x+r)$,
where $x\in [-\tfrac12M,\tfrac12M]$ and $0<r\le \tfrac14M$.
Intervals centred at $x\in X\setm [-\tfrac12M,\tfrac12M]$, and 
of length at most $\tfrac12 M$, can be treated similarly by reflecting at
 $M$.
Since $X$ is compact, the global doubling and \p-Poincar\'e inequality
then follow  for all $I\subset X$,
by Proposition~1.2 and Theorem~1.3 in
Bj\"orn--Bj\"orn~\cite{BBsemilocal}.

By symmetry, we can assume that $0\le x\le 2r$.
(If $2r<x\le\tfrac12 M$ then $2I\subset[0,M]$ and the doubling property
for $\muh$ and $I$  is immediate.)
From the doubling property of $\mu$ within $(-M,2M)$ it follows that
the measures $\mu((0,a))$ and $\mu((-a,0))$ are comparable for every
$0<a<M$. 
Namely, with $C_d$ being the doubling constant within $(-M,2M)$,
\[ 
\mu((0,a)) \le 
\mu((-a,a)) \le C_d \mu\bigl(\bigl(-\tfrac12a,\tfrac12a\bigr)\bigr) 
\le C_d \mu\bigl(\bigl(-\tfrac32a,\tfrac12a\bigr)\bigr) \le C_d^2 \mu((-a,0)), 
\] 
and similarly 
$\mu((-a,0)) \le C_d^2 \mu((0,a))$.
Hence,
\[
\muh(2I) \le 2\mu((0,x+2r)) 
\le 2\mu(2I) 
\le 2C_d \mu(I) \le 4C_d^3\mu(I\cap[0,M)) \le 4C_d^3\muh(I)
\]
and thus $\muh$ is doubling on $X$.

We shall now prove the \p-Poincar\'e inequality for $\muh$.
As above, and by symmetry, we let $I=(x-r,x+r)$ with
$0 \le x\le \frac12 M$  and $r \le \frac14 M$.
Since $X$ is complete, it suffices to verify the Lipschitz 
\p-Poincar\'e inequality on $I$,
cf.\ Keith~\cite[Theorem~2]{Keith} (or 
\cite[Theorem~8.4.2]{HKST}).

If $r\le x$, then $I\subset [0,M]$ and the 
Lipschitz \p-Poincar\'e inequality 
for $\muh$ follows directly from the one for $\mu$.
Assume therefore that $0 \le x <r \le \frac14 M$.
Let $u$ be a Lipschitz function on $I$.
We can also assume that $u(0)=0$ and thus $\uhat:=u\chi_{(0,x+r)}$
is also Lipschitz.
Let $I'=(-r,x+r)$.
Since $\mu((-r,0))$, $\mu(I')$ and $\mu(I)$ are all comparable, 
because of 
the doubling property of $\mu$, 
the proof of Lemma~2.1 in Kinnunen--Shanmugalingam~\cite{KiSh01} shows that
\[
\vint_{I}|\uhat|\,d\muh
\le C \vint_{I'}|\uhat|\,d\mu
\le C |I'| \biggl( \vint_{I'} (\Lip \uhat)^p 
\,d\mu \biggr)^{1/p}
\le C |I| \biggl( \vint_{I} (\Lip u)^p 
d\muh \biggr)^{1/p}.
\]
The integral of $\ut:=u\chi_{(x-r,0)}$ over $I$ is estimated similarly 
using reflection, and hence
\begin{align*}
\vint_{I}|u-u(0)|\,d\muh 
&\le C\biggl(\vint_{I}|\uhat|\,d\muh + \vint_{I}|\ut|\,d\muh\biggr)
\le C |I| \biggl( \vint_{I} (\Lip u)^p 
\,d\muh \biggr)^{1/p}.
\end{align*}
Finally, we note that 
$\vint_I |u-u_{I,\muh}|\,d\muh \le 2\vint_{I}|u-u(0)|\,d\muh$,
see \cite[Lemma~4.17]{BBbook}.
\end{proof}

\begin{cor}  \label{cor-loc-adm-R}
If $\mu$ is Lipschitz
\p-admissible within an open interval $I_0\subset\R$, then 
it is absolutely continuous with respect to the Lebesgue measure on $I_0$.
\end{cor}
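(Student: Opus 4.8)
The plan is to combine Lemma~\ref{lem-adm-on-circle} with the flattening transformation of Durand-Cartagena--Li~\cite{Esti-Xining} and the global characterization of Bj\"orn--Buckley--Keith~\cite[Theorem~2]{BBK-Ap}. Since absolute continuity is a local property and $I_0$ is open, it suffices to show that every $y\in I_0$ has an open neighbourhood $J\subset I_0$ on which $\mu\ll dx$; countably many such $J$ cover $I_0$, and a set that is $\mu$-null on each piece is $\mu$-null on the union. So fix $y$ and, using that $I_0$ is open, choose $M>0$ and a translate so that the \emph{enlarged} interval $(-M,2M)$ lies in $I_0$ while $y$ lies in the fundamental domain $(0,M)$. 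Because $\mu$ is doubling and supports a Lipschitz \p-Poincar\'e inequality within $I_0$, and these conditions are imposed for all balls $B\subset(-M,2M)\subset I_0$, the measure $\mu$ retains both properties within $(-M,2M)$.

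Next I would apply Lemma~\ref{lem-adm-on-circle} to obtain the reflected measure $\muh$ on the circle $X$, which is doubling and supports a \p-Poincar\'e inequality there. By construction $\muh$ coincides with $\mu$ on $(0,M)$, since $-A\cap[0,M]=\emptyset$ whenever $A\subset(0,M)$. I would then flatten $X$ at a point lying outside the (closed) image of $(0,M)$: the Durand-Cartagena--Li flattening turns $(X,d,\muh)$ into a globally doubling metric measure space on $\R$ supporting a global $q$-Poincar\'e inequality for some $q$. The precise value of $q$ is immaterial; all that is needed is that the flattened measure is globally $q$-admissible on $\R$, so that by \cite[Theorem~2]{BBK-Ap} it is given by an $A_q$ weight and, in particular, is absolutely continuous with respect to the Lebesgue measure of the flat line.

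It then remains to transfer this absolute continuity back to $\mu$. The flattening map restricts to a bi-Lipschitz homeomorphism on any compact arc of $X$ not containing the flattening point, so on such arcs it preserves Lebesgue-null sets and makes the flat reference measure comparable to the arc-length measure inherited from the circle. Hence $\muh$ is absolutely continuous with respect to arc length on a compact arc containing the image of $(0,M)$, and since arc length corresponds to $dx$ on the interval, $\mu=\muh$ is absolutely continuous with respect to $dx$ on $(0,M)\ni y$. Taking $J=(0,M)$ (in the original coordinates) completes the local step, and the covering argument finishes the proof. The one genuinely delicate point is this last transfer: one must verify that Lebesgue-null sets are preserved under the flattening homeomorphism and that the flat reference measure is comparable to arc length away from the flattening point, so that absolute continuity with respect to the flat Lebesgue measure truly descends to absolute continuity of $\mu$ with respect to $dx$. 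Everything else is a direct invocation of Lemma~\ref{lem-adm-on-circle}, \cite{Esti-Xining} and \cite[Theorem~2]{BBK-Ap}.
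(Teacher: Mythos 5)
Your proposal is correct and takes essentially the same route as the paper's proof: fix a point, translate so that $(-M,2M)\subset I_0$, reflect to the circle via Lemma~\ref{lem-adm-on-circle}, flatten as in Durand-Cartagena--Li~\cite{Esti-Xining} to obtain a globally $q$-admissible measure on $\R$, invoke \cite[Theorem~2]{BBK-Ap} to get absolute continuity, transfer it back, and cover $I_0$. The only (harmless) deviation is the choice of flattening point: the paper flattens at the reflection point $0$ itself, with density $d\mut(y)=d\muh(y)/\muh((-|y|,|y|))$, whereas you flatten at a point away from $[0,M]$, which makes your transfer-back step uniform on all of $[0,M]$ rather than via compact exhaustion of $(0,M]$ together with nonatomicity of $\muh$ at $0$.
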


\begin{proof}
Let $x\in I_0$ be arbitrary.
By translation, we can assume that $x=0$ and that $(-M,2M)\subset I_0$ 
for some $M>0$.
Lemma~\ref{lem-adm-on-circle} then shows that the metric space $(X,d,\muh)$, 
obtained by reflecting $\mu$ at $0$ and
identifying the points $\pm M$ with each other, supports a \p-Poincar\'e
inequality with $\muh$ doubling.
Now, flattening $(X,d,\muh)$ as in Durand-Cartagena--Li~\cite{Esti-Xining},
we obtain $\R$ with the measure 
\[
d\mut(y) = \frac{d\muh(y)}{\muh((-|y|,|y|))} 
= \frac{d\muh(y)}{2\mu((0,|y|))} \quad \text{for } y\in X,
\]
which, by~\cite[Theorem~4.1]{Esti-Xining}, is $q$-admissible for 
some sufficiently large $q$. 
Theorem~2 in Bj\"orn--Buckley--Keith~\cite{BBK-Ap} then implies that $\mut$ 
must be an $A_q$ weight on $\R$ and, in particular, 
it is absolutely continuous with respect to the Lebesgue measure. 
It follows that also $\muh$ is absolutely continuous with respect to the
Lebesgue measure on $[0,M]$.
Applying this to all $x\in I_0$, with corresponding $M$, shows that $\mu$
is absolutely continuous with respect to the Lebesgue measure on $I_0$.
\end{proof}

\begin{thm}   \label{thm-local-Ap}
Assume that $\mu$ is Lipschitz
\p-admissible within a bounded open interval $I_0$
and let $\theta>1$. 
Then $d\mu=w\,dt$ for some nonnegative weight $w$ and $w$ is
an $A_p$ weight within $\theta^{-1} I_0$, 
with an $A_p$ constant depending on $\theta$ and $\mu$.
\end{thm}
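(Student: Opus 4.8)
The plan is to exploit the absolute continuity that is already available and to read off the $A_p$ condition directly from the Poincaré inequality, as announced in the introduction. By Corollary~\ref{cor-loc-adm-R}, Lipschitz \p-admissibility within $I_0$ already gives $d\mu=w\,dt$ with $w\ge0$ on $I_0$, and by Proposition~\ref{prop-la=1} I may take $\la=1$. Since $\mu$ is doubling within $I_0$, the Lipschitz and standard \p-Poincar\'e inequalities agree there (cf.\ Keith~\cite[Theorem~2]{Keith} and the discussion in Section~\ref{sect-doubl-PI}), so I have~\eqref{eq-def-PI-B} at my disposal with upper gradient $|u'|$. Fix $p>1$ and a subinterval $B=(a,b)\subset\theta^{-1}I_0$, and write $\sigma=w^{1/(1-p)}$. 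Clearing denominators in~\eqref{eq-Ap-def}, the $A_p$ bound on $B$ is equivalent to
\begin{equation} \label{eq-plan-star}
\mu(B)\Bigl(\int_B\sigma\,dx\Bigr)^{p-1}\le C|B|^p,
\end{equation}
and this is what I would prove, with $C$ depending only on $\theta$ and on the doubling and Poincar\'e data of $\mu$ within $I_0$.

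To produce a competitor I set $\sigma_N=\min(\sigma,N)$ and $u(x)=\int_a^x\sigma_N\,dt$, which is Lipschitz (hence admissible) and increasing, with $|u'|=\sigma_N$. The pointwise identity $\sigma^{p-1}w=1$ gives $\sigma_N^{p}w=\sigma_N(\sigma_N^{p-1}w)\le\sigma_N$, so
\[
\vint_B|u'|^p\,d\mu=\frac1{\mu(B)}\int_B\sigma_N^{p}w\,dx\le\frac{S_N}{\mu(B)}, \qquad S_N:=\int_B\sigma_N\,dx,
\]
and~\eqref{eq-def-PI-B} bounds the left-hand side of the Poincar\'e inequality by $C|B|(S_N/\mu(B))^{1/p}$. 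If I can establish the matching lower bound
\begin{equation} \label{eq-plan-lower}
\vint_B|u-u_B|\,d\mu\ge cS_N,
\end{equation}
then combining the two estimates and letting $N\to\infty$ (monotone convergence, $S_N\uparrow\int_B\sigma$) yields exactly~\eqref{eq-plan-star}.

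The inequality~\eqref{eq-plan-lower} is the heart of the matter. Choosing $x_0$ with $\mu((a,x_0))=\tfrac12\mu(B)$, the value $u(x_0)$ is a $\mu$-median of $u$, so $\vint_B|u-u_B|\,d\mu\ge\vint_B|u-u(x_0)|\,d\mu$; discarding the part left of $x_0$ and using Fubini,
\[
\int_B|u-u(x_0)|\,d\mu\ge\int_{x_0}^b\Bigl(\int_{x_0}^x\sigma_N\,dt\Bigr)w(x)\,dx=\int_{x_0}^b\sigma_N(t)\,\mu((t,b))\,dt.
\]
Thus~\eqref{eq-plan-lower} reduces to showing that a definite proportion of the $\sigma_N$-mass of $(x_0,b)$ sits where $\mu((t,b))\gtrsim\mu(B)$, i.e.\ that the $\sigma_N$-mass and the $\mu$-mass are not anti-concentrated. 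This is where the doubling hypothesis is indispensable and where the careful estimates enter: doubling of $w$ prevents $w$, and hence $\sigma$, from varying abruptly, and — using the room $\theta B\subset I_0$ afforded by the hypothesis $B\subset\theta^{-1}I_0$, which lets me dilate $B$ while staying inside $I_0$ — I would compare the $\mu$-measures of the successive pieces of $(x_0,b)$ so as to trap a fixed fraction of the $\sigma_N$-mass in a region of $\mu$-measure comparable to $\mu(B)$. I expect this alignment step, rather than the reduction above, to be the main obstacle.

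Finally, the case $p=1$ should be handled analogously but more simply: there is no dual weight, and one instead tests~\eqref{eq-def-PI-B} with functions concentrated near a point where $w$ is close to $\essinf_B w$, so that the Poincar\'e inequality directly yields $\vint_B w\,dx\le C\essinf_B w$, the $A_1$ condition. Collecting these bounds over all $B\subset\theta^{-1}I_0$ shows that $w$ is an $A_p$ weight within $\theta^{-1}I_0$, with constant depending only on $\theta$ and $\mu$.
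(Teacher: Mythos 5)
Your reduction is sound as far as it goes --- the truncation $\sigma_N=\min\{\sigma,N\}$, the bound $\vint_B \sigma_N^p\,d\mu\le S_N/\mu(B)$ via $\sigma^{p-1}w=1$, and the algebra showing that a lower bound $\vint_B|u-u_B|\,d\mu\ge c\,S_N$ would combine with \eqref{eq-def-PI-B} and monotone convergence to give $\mu(B)\bigl(\int_B\sigma\,dx\bigr)^{p-1}\le C|B|^p$ --- but the argument stops exactly at its only nontrivial point. The lower bound is never established: after the median/Fubini reduction you need $\int_{x_0}^b\sigma_N(t)\,\mu((t,b))\,dt\ge c\,S_N$, i.e.\ that the mass of $\sigma\,dx$ is not anti-concentrated against that of $w\,dx$, and you say yourself that you expect this alignment step to be the main obstacle. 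Doubling of $\mu$ alone does not control where $\sigma=w^{1/(1-p)}$ puts its mass: $\sigma$ is large precisely where $w$ is small, so the two measures are naturally anti-correlated near an endpoint of $B$. What would rescue the alignment is doubling of $\sigma\,dx$, i.e.\ the dual $A_{p/(p-1)}$ condition --- which is essentially equivalent to the conclusion you are trying to prove. So as written the plan is circular (or at least open) at its core step, and the same defect recurs in your $p=1$ sketch, since testing on $B$ itself fails when $E_\eps=\{w<\essinf_B w+\eps\}$ hugs an endpoint. (A side remark: your appeal to Keith's theorem to upgrade the Lipschitz Poincar\'e inequality is both unnecessary --- your test functions are Lipschitz with $\Lip u=|u'|$ --- and delicate in this local setting, where that equivalence is part of what Theorem~\ref{thm-p-adm-char} is proving.)

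The paper removes the obstacle with one structural change you did not make: it tests the Lipschitz Poincar\'e inequality not on $I=B$ but on the dilated interval $\theta I\subset I_0$, with the test function $u(y)=\int_{-\infty}^y w(t)\chi_I(t)(w(t)+\eps)^{-p/(p-1)}\,dt$, whose derivative is supported in $I$. Then $u\equiv 0$ on the left component $I_L$ of $\theta I\setm I$ and $u\equiv u(x+r)$ on the right component $I_R$, so whatever the value of $u_{\theta I}$, one has $|u-u_{\theta I}|\ge\tfrac12 u(x+r)$ on at least one of $I_L$, $I_R$; and since doubling within $I_0$ makes $\mu(I_L)$ and $\mu(I_R)$ comparable to $\mu(\theta I)$ --- this is exactly where $\theta>1$ and the room between $\theta^{-1}I_0$ and $I_0$ are spent --- the left-hand side of the Poincar\'e inequality is at least $C\,u(x+r)=C\int_I(w+\eps)^{-p/(p-1)}\,d\mu$, with no information needed about how the $\sigma$-mass is distributed inside $I$. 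This ``constant on the collar'' device is precisely the lower bound your median argument cannot reach; the same trick handles $p=1$ with $u'=\chi_{E_\eps}$. If you replace your median step by it (your truncation $\sigma_N$ versus the paper's $\eps$-regularization is immaterial), the rest of your computation goes through essentially verbatim.
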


\begin{proof}
Corollary~\ref{cor-loc-adm-R} shows that 
$d\mu=w\,dt$ for some weight
function $w$ on $I_0$.
Let $I=(x-r,x+r)\subset \theta^{-1} I_0$. Then
$\theta I\subset I_0$.

First, we consider the case $p>1$
and test the Lipschitz
\p-Poincar\'e inequality on $\theta I$ with the Lipschitz function
\[
u(y):= \int_{-\infty}^y \frac{w(t) \chi_I(t)}{(w(t)+\eps)^{p/(p-1)}}\,dt,
\]
where $\eps>0$ is fixed but arbitrary.
Note that $u\equiv0$ on the left component $I_L=(x-\theta r,x-r]$ of 
$\theta I\setm I$ and that $u\equiv u(x+r)$ 
on the right component $I_R=[x+r,x+\theta r)$ of $\theta I\setm I$.
Hence, at least one of the following holds
\[
|u-u_{\theta I}| \ge\tfrac12 u(x+r) \text{ on } I_L
\quad \text{or} \quad
|u-u_{\theta I}| \ge\tfrac12 u(x+r) \text{ on } I_R.
\]
Since $\mu(I_L)$ and $\mu(I_R)$ are comparable to $\mu(\theta I)$, 
with comparison constant depending on $\theta$,
this implies that
the left-hand side of the Lipschitz \p-Poincar\'e inequality on $\theta I$ is
\[
\vint_{\theta I} |u-u_{\theta I}|\,d\mu \ge Cu(x+r) 
= C\int_I \frac{d\mu}{(w+\eps)^{p/(p-1)}}.
\]
At the same time, $\Lip u = |u'|\le (w+\eps)^{-1/(p-1)}\chi_I$ a.e. 
(by the fundamental theorem of calculus) and thus
the right-hand side of the Lipschitz \p-Poincar\'e inequality is 
\[
Cr \biggl( \vint_{\theta I} (\Lip u)^p\,d\mu \biggr)^{1/p}
\le Cr \biggl( \frac{1}{\mu(\theta I)} \int_{I} 
           \frac{d\mu}{(w+\eps)^{p/(p-1)}} \biggr)^{1/p}.
\]
Combining
the last 
two estimates with the Lipschitz \p-Poincar\'e inequality yields
\[
\mu(\theta I)^{1/p} 
  \le Cr\biggl( \int_I \frac{d\mu}{(w+\eps)^{p/(p-1)}} \biggr)^{1/p-1}
  = Cr\biggl( \int_I \frac{w(t)\,dt}{(w(t)+\eps)^{p/(p-1)}} \biggr)^{1/p-1}.
\]
Raising the last estimate to the $p$th power, writing 
$\int_I w(t)\,dt=\mu(I)\le\mu(\theta I)$, dividing by $|I|=2r$ 
and letting $\eps\to0$ 
we obtain~\eqref{eq-Ap-def} for $I$, by monotone convergence.

Now, we consider $p=1$ and let $m=\essinf_I w$.
Test the Lipschitz $1$-Poincar\'e inequality on $\theta I$ with the Lipschitz function
\[
u(y):= \int_{-\infty}^y \chi_{E_\eps}(t)\,dt,
\]
where $E_\eps=\{t\in I: w(t)<m +\eps\}$ and
$\eps>0$ is fixed but arbitrary.
Then, as above, $u(x+r)=|E_\eps|$ is majorized by the right-hand side 
in the Lipschitz $1$-Poincar\'e inequality, i.e.\ 
\[
0< |E_\eps| 
\le \frac{Cr}{\mu(\theta I)} \int_{\theta I} \chi_{E_\eps}(t) w(t)\,dt 
\le Cr \frac{(m+\eps)|E_\eps|}{\mu(I)}.
\]
Dividing by $|E_\eps|>0$ and letting $\eps\to0$, 
yields~\eqref{eq-Ap-def} for $I$, i.e.\ within $\theta^{-1} I_0$. 
\end{proof}

\begin{proof}[Proof of Theorem~\ref{thm-p-adm-char}]
\ref{c-semi-Ap} $\imp$ \ref{c-glob-Ap} 
This follows from Lemma~\ref{lem-Ap-refl}.

\ref{c-glob-Ap} $\imp$ \ref{c-semi-Ap} $\imp$ \ref{c-loc-Ap}
and \ref{c-loc} $\imp$ \ref{c-Lip-loc}
These implications are trivial.

\ref{c-loc-Ap} $\imp$  \ref{c-loc}
This follows from 
Corollary~\ref{cor-loc-Ap-imp-loc-adm}.

\ref{c-loc} $\imp$ \ref{c-semi}
This follows from Proposition~1.2 and Theorem~1.3 in 
Bj\"orn--Bj\"orn~\cite{BBsemilocal}.

\ref{c-Lip-loc} $\imp$ \ref{c-loc-Ap} and
\ref{c-semi} $\imp$ \ref{c-semi-Ap}
These implications follow from Theorem~\ref{thm-local-Ap}.
\end{proof}

Lemma~\ref{lem-Ap-refl} and Theorem~\ref{thm-local-Ap} imply that
if $\mu$ is \p-admissible within $(-\theta M,\theta M)$ for some $\theta>1$,
 then the periodically
repeated reflections of $\mu|_{[-M,M]}$ provide a \p-admissible measure on $\R$.
For the above arguments to hold it is important
that \p-admissibility is assumed within a larger interval. 
Next, we give an example showing that
it is not enough to assume \p-admissibility within $(-M,M)$.

\begin{example}   \label{ex-not-extension}
Let $X=[0,1]$, $w(x)=x^\alp$, $\alp \ge 0$, and $d \mu = w\,dx$.
Then $\mu$ is doubling on $X$.
By Chua--Wheeden~\cite[Theorem~1.4]{ChuaWheeden},
$\mu$ supports a $1$-Poincar\'e inequality for the 
interval $(a,b)\subset [0,1]$ with $\la=1$ and the optimal constant
\begin{align*}
     C
 &=\frac{2}{(b-a)\mu(a,b)} 
     \biggl\|\frac{\mu((a,x))\mu((x,b))}{w(x)} \biggr\|_{L^\infty(a,b)} 
    \le \frac{2}{(b-a)} 
     \biggl\|\frac{\mu((a,x))}{w(x)} \biggr\|_{L^\infty(a,b)} \\\
 & = \frac{2}{(b-a)} 
     \biggl\|\frac{x^{1+\alp}-a^{1+\alp}}{(1+\alp)w(x)} \biggr\|_{L^\infty(a,b)} 
  = \frac{2}{(b-a)} 
     \biggl\|\frac{(x-a) \xi_x^\alp}{x^\alp} \biggr\|_{L^\infty(a,b)} 
  \le  2,
\end{align*}
where $\xi_x \in(a,x)$ comes from the mean-value theorem.
As this holds for all intervals $(a,b)\subset [0,1]$, 
$\mu$ supports a $1$-Poincar\'e inequality 
with respect to the metric space
$[0,1]$,
with constant $2$.
It follows that $\mu$, extended by $0$ outside $[0,1]$,
is \p-admissible within $(0,1)$.

If $p < 1+\alp$, then $w^{1/(1-p)}$ is not integrable at $0$, and hence
the $A_p$ condition~\eqref{eq-Ap-def} does not hold for $w$ within $(0,1)$.
It is also easily verified that
the set $\{0\}$ has zero capacity
with respect to the metric space $[0,1]$.
This
implies that the collection of all nonconstant 
compact rectifiable curves in $[0,1]$ starting at $0$ has \p-modulus 
zero (see \cite[Proposition~1.48]{BBbook}).
Hence, 
$\chi_{(0,\infty)}$ has $0$ as a \p-weak upper gradient on $(\R,\muh)$
for any extension $\muh$ of $\mu$ to $\R$, which
violates the \p-Poincar\'e inequality
on the interval $(-1,1)$.
Thus, $\mu$ is \emph{not}
a restriction of any measure on $\R$ supporting a \p-Poincar\'e inequality.
\end{example}

\section{Consequences of Theorem~\ref{thm-p-adm-char-intro}}
\label{sect-cor-Ap}

\begin{cor}
Let $\mu_j$, $j=1,2$, be locally \p-admissible measures on $\R$.
Then $\mu=\mu_1+\mu_2$ is locally \p-admissible on $\R$.
\end{cor}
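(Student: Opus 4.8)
The plan is to derive this directly from the characterization in Theorem~\ref{thm-p-adm-char}, which converts local \p-admissibility into the purely analytic local $A_p$ condition, and then to verify that the local $A_p$ class is closed under addition. The point is that once we are working with weights rather than with doubling and Poincar\'e inequalities, the sum behaves well, whereas adding two measures and checking the Poincar\'e inequality for the sum directly would be awkward.

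First I would apply the equivalence \ref{c-loc} $\eqv$ \ref{c-loc-Ap} of Theorem~\ref{thm-p-adm-char} to each $\mu_j$, obtaining $d\mu_j=w_j\,dx$ with each $w_j$ a local $A_p$ weight. Then $d\mu=(w_1+w_2)\,dx$, so it suffices to show that $w_1+w_2$ is again a local $A_p$ weight; the conclusion then follows by applying the same equivalence in the reverse direction. To check the local $A_p$ property, fix $x\in\R$. By local $A_p$-ness there are radii $R_x^{(1)},R_x^{(2)}>0$ and constants $C_x^{(1)},C_x^{(2)}$ so that $w_j$ satisfies~\eqref{eq-Ap-def} within $B(x,R_x^{(j)})$. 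Setting $R_x=\min\{R_x^{(1)},R_x^{(2)}\}$, both conditions hold simultaneously for every ball $B\subset B(x,R_x)$, and it remains to establish~\eqref{eq-Ap-def} for $w_1+w_2$ on such $B$.

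The only step requiring any care, which I regard as the sole genuinely nontrivial point, is this within-a-ball additivity; it is exactly the content of Lemma~\ref{lem-Ap-lattice}, but the computation is short enough that I would simply run it. For $p>1$ the first factor of~\eqref{eq-Ap-def} is additive, while for the second factor one exploits that $1/(1-p)<0$, so that $(w_1+w_2)^{1/(1-p)}\le w_j^{1/(1-p)}$ pointwise and hence $\vint_B(w_1+w_2)^{1/(1-p)}\,dx\le\vint_B w_j^{1/(1-p)}\,dx$ for $j=1,2$. Pairing the $w_1$- and $w_2$-parts of $\vint_B(w_1+w_2)\,dx$ with the corresponding bound gives
\[
\vint_B(w_1+w_2)\,dx\biggl(\vint_B(w_1+w_2)^{1/(1-p)}\,dx\biggr)^{p-1}\le C_x^{(1)}+C_x^{(2)}.
\]
For $p=1$ the argument is even more direct, using subadditivity from below, $\essinf_B w_1+\essinf_B w_2\le\essinf_B(w_1+w_2)$, which yields the $A_1$ constant $\max\{C_x^{(1)},C_x^{(2)}\}$.

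Since $x\in\R$ was arbitrary, this shows that $w_1+w_2$ is a local $A_p$ weight with the radii $R_x$ and constants $C_x^{(1)}+C_x^{(2)}$ (respectively $\max\{C_x^{(1)},C_x^{(2)}\}$ when $p=1$). Applying Theorem~\ref{thm-p-adm-char} once more, now in the direction \ref{c-loc-Ap} $\imp$ \ref{c-loc}, we conclude that $\mu=\mu_1+\mu_2$ is locally \p-admissible on $\R$. I do not anticipate a serious obstacle here: the whole difficulty has been front-loaded into the characterization theorem, and what remains is the elementary negative-exponent estimate together with bookkeeping of the local radii and constants.
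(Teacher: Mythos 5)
Your proposal is correct and follows essentially the same route as the paper: the paper likewise deduces the corollary directly from the characterization in Theorem~\ref{thm-p-adm-char-intro}, with the within-a-ball additivity of the $A_p$ condition supplied by Lemma~\ref{lem-Ap-lattice}, whose sum case is exactly the negative-exponent computation you carried out (including the superadditivity of $\essinf$ for $p=1$). Your bookkeeping of the local radii $R_x=\min\{R_x^{(1)},R_x^{(2)}\}$ and constants is the only part the paper leaves implicit, and it is handled correctly.
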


\begin{cor}
Let $w_j$, $j=1,2$, be locally \p-admissible weights on $\R$.
Then $\max\{w_1,w_2\}$ and $\min\{w_1,w_2\}$ are locally \p-admissible on $\R$.
Moreover, for $p>1$, the weight $w_1^{1/(1-p)}$ is locally $p/(p-1)$-admissible 
on $\R$.
\end{cor}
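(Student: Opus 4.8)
The plan is to reduce everything to the characterization in Theorem~\ref{thm-p-adm-char-intro}. By the equivalence \ref{d-loc}~$\Leftrightarrow$~\ref{d-loc-Ap} there, a weight on $\R$ is locally \p-admissible if and only if it is a local $A_p$ weight, so all three assertions of the corollary become purely algebraic properties of the $A_p$ condition~\eqref{eq-Ap-def}. Since \eqref{eq-Ap-def} is required to hold for every ball $B\subset B_0$ and involves only the averages of the weight and of its dual power over $B$, it suffices to verify the relevant condition ball by ball: I would fix a bounded open interval $B_0$ on which $w_1$ and $w_2$ both satisfy \eqref{eq-Ap-def}, and estimate on an arbitrary $B\subset B_0$.

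For the ``Moreover'' part I would first record the local duality for the symmetric condition~\eqref{eq-Ap-def}. Writing $\sigma=w^{1/(1-p)}$ and $p'=p/(p-1)$, raising the $A_p$ inequality for $w$ to the power $1/(p-1)$ produces exactly the $A_{p'}$ inequality for $\sigma$; hence $w$ is a local $A_p$ weight if and only if $w^{1/(1-p)}$ is a local $A_{p'}$ weight, the local integrability needed to make $\sigma$ a genuine weight being furnished by the $A_p$ condition itself. Applying Theorem~\ref{thm-p-adm-char-intro} once more, now with the exponent $p'=p/(p-1)$, then shows that $w_1^{1/(1-p)}$ is locally $p/(p-1)$-admissible.

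For the lattice part, with $p>1$, the crucial observation is that $x\mapsto x^{1/(1-p)}$ is decreasing, so that $(\max\{w_1,w_2\})^{1/(1-p)}=\min\{\sigma_1,\sigma_2\}$ and $(\min\{w_1,w_2\})^{1/(1-p)}=\max\{\sigma_1,\sigma_2\}$, where $\sigma_i=w_i^{1/(1-p)}$. To see that $\max\{w_1,w_2\}$ is a local $A_p$ weight I would bound the non-powered factor by $\vint_B\max\{w_1,w_2\}\le\vint_B w_1+\vint_B w_2$ and the $(p-1)$-powered factor by $\vint_B\min\{\sigma_1,\sigma_2\}\le\vint_B\sigma_i$, and then distribute, so that the cross terms recombine into $C_1+C_2$ via the $A_p$ conditions for $w_1$ and $w_2$. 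The corresponding statement for $\min\{w_1,w_2\}$ is then free from the duality above: $(\min\{w_1,w_2\})^{1/(1-p)}=\max\{\sigma_1,\sigma_2\}$ is the maximum of the two local $A_{p'}$ weights $\sigma_1,\sigma_2$ and hence is itself a local $A_{p'}$ weight, whence $\min\{w_1,w_2\}$ is a local $A_p$ weight.

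The endpoint $p=1$ must be handled separately, since there the duality $A_p\leftrightarrow A_{p'}$ degenerates. For the maximum I would use $\max\{w_1,w_2\}\le w_1+w_2$ together with $\essinf_B w_i\le\essinf_B\max\{w_1,w_2\}$, which converts the two $A_1$ bounds into one for $\max\{w_1,w_2\}$. For the minimum the one nonobvious ingredient is the identity $\essinf_B\min\{w_1,w_2\}=\min\{\essinf_B w_1,\essinf_B w_2\}$, whose nontrivial direction follows from the pointwise a.e.\ bound $\min\{w_1,w_2\}\ge\min\{\essinf_B w_1,\essinf_B w_2\}$; combined with $\vint_B\min\{w_1,w_2\}\le\min\{\vint_B w_1,\vint_B w_2\}$ this yields the $A_1$ bound for $\min\{w_1,w_2\}$. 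I expect the only genuine subtlety to be bookkeeping: getting the correct pairing in the $p>1$ maximum estimate (the summed bound on the plain average played against the individual bounds on the $(p-1)$-power), and remembering that the $p=1$ case requires its own argument rather than the clean duality used for $p>1$.
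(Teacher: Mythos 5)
Your proposal is correct and follows essentially the same route as the paper: the paper likewise reduces via Theorem~\ref{thm-p-adm-char-intro} to the local $A_p$ condition and then proves the lattice property ball by ball (its Lemma~\ref{lem-Ap-lattice}), using exactly your duality observation that the $A_p$ condition for $w$ is the $A_{p/(p-1)}$ condition for $w^{1/(1-p)}$ to settle both the ``moreover'' part and the $\min$ case, and giving the same separate $\essinf$-based argument at $p=1$. The only cosmetic differences are how the terms are grouped in the $p>1$ estimate for $\max\{w_1,w_2\}$ and that the paper's lemma simultaneously treats $w_1+w_2$, which your corollary does not need.
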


These statements follow directly from the characterizations in 
Theorem~\ref{thm-p-adm-char-intro} together with similar statements for
global $A_p$ weights. 
The lattice property of global $A_p$ weights on $\R^n$
was proved in 
Kilpel\"ainen--Koskela--Masaoka~\cite[Proposition~4.3]{KilKoMa}
using nontrivial characterizations of $A_p$ and $A_\infty$ weights.
Here we seize the opportunity to provide 
an elementary proof, including $p=1$
and also covering the local case.

It is 
straightforward that the $A_p$ 
condition
\begin{equation} \label{eq-Ap-def-2}
\vint_B w\, dx < C 
\begin{cases} 
       \biggl( \displaystyle\vint_{B} w^{1/(1-p)}\,dx \biggr)^{1-p},
         &1<p<\infty, \\
      \displaystyle \essinf_B w, &p=1,  \end{cases}  
\end{equation}
for $w$ is precisely the $A_{p/(p-1)}$ condition
for the conjugate weight $w^{1/(1-p)}$ with the $A_{p/(p-1)}$ constant
$C^{1/(p-1)}$ when $p>1$.

\begin{lem} \label{lem-Ap-lattice}
Assume that the $A_p$ condition 
holds for $w_1$ and $w_2$ 
with a constant~$C$ in some ball $B\subset\R^n$.
Then it holds also for $w_1+w_2$, $\max\{w_1,w_2\}$ 
and $\min\{w_1,w_2\}$ with constants $2C$, $2C$ and $2^{p-1} C$,
respectively.
\end{lem}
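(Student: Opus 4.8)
The plan is to split into the cases $p>1$ and $p=1$. For $p>1$ I would treat $w_1+w_2$ and $\max\{w_1,w_2\}$ by a single pointwise argument, and then reduce $\min\{w_1,w_2\}$ to the $\max$ case by the conjugacy duality recorded just before the lemma. The case $p=1$ I would dispatch directly using elementary identities for $\essinf$.

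For $p>1$, I would record two pointwise facts that hold simultaneously for $w=w_1+w_2$ and for $w=\max\{w_1,w_2\}$: first $w\le w_1+w_2$, and second, since $t\mapsto t^{1/(1-p)}$ is decreasing, $w^{1/(1-p)}\le\min\{w_1^{1/(1-p)},w_2^{1/(1-p)}\}$ a.e. Writing $a_i=\vint_B w_i\,dx$ and $b_i=\bigl(\vint_B w_i^{1/(1-p)}\,dx\bigr)^{p-1}$, the hypothesis is $a_ib_i<C$. Integrating the two pointwise bounds over $B$ gives
\[
\vint_B w\,dx\;\Bigl(\vint_B w^{1/(1-p)}\,dx\Bigr)^{p-1}\le (a_1+a_2)\min\{b_1,b_2\},
\]
and the numerical inequality $(a_1+a_2)\min\{b_1,b_2\}=a_1\min\{b_1,b_2\}+a_2\min\{b_1,b_2\}\le a_1b_1+a_2b_2<2C$ yields the $A_p$ condition for both $w_1+w_2$ and $\max\{w_1,w_2\}$ with constant $2C$.

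For $\min\{w_1,w_2\}$ with $p>1$ I would invoke the remark preceding the lemma: $w_i$ is $A_p$ with constant $C$ if and only if $w_i^{1/(1-p)}$ is $A_{p'}$, $p'=p/(p-1)$, with constant $C^{1/(p-1)}$. Since $1/(1-p)<0$ we have $\max\{w_1^{1/(1-p)},w_2^{1/(1-p)}\}=(\min\{w_1,w_2\})^{1/(1-p)}$, so applying the already-proved $\max$ result at the exponent $p'>1$ to the conjugate weights shows that $(\min\{w_1,w_2\})^{1/(1-p)}$ is $A_{p'}$ with constant $2C^{1/(p-1)}$. Converting back by the same duality, now at exponent $p'$ for which $p'-1=1/(p-1)$, the conjugate of this weight is $\min\{w_1,w_2\}$ itself, and it is $A_p$ with constant $(2C^{1/(p-1)})^{p-1}=2^{p-1}C$, exactly the claimed constant. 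I expect this bookkeeping of the duality exponents and constants to be the one genuinely delicate point of the proof; everything else is a short computation.

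Finally, for $p=1$ I would use the identities $\essinf_B\max\{w_1,w_2\}\ge\max\{\essinf_B w_1,\essinf_B w_2\}$, $\essinf_B(w_1+w_2)\ge\essinf_B w_1+\essinf_B w_2$, and $\essinf_B\min\{w_1,w_2\}=\min\{\essinf_B w_1,\essinf_B w_2\}$ (the last obtained from the a.e.\ bound $\min\{w_1,w_2\}\ge\min\{\essinf_B w_1,\essinf_B w_2\}$ together with $\min\{w_1,w_2\}\le w_i$). Then $\vint_B\max\{w_1,w_2\}\,dx\le\vint_B(w_1+w_2)\,dx<C(\essinf_B w_1+\essinf_B w_2)\le 2C\,\essinf_B\max\{w_1,w_2\}$ handles the maximum (and the sum, in fact with the smaller constant $C$), while $\vint_B\min\{w_1,w_2\}\,dx\le\min_i\vint_B w_i\,dx<C\min_i\essinf_B w_i=C\,\essinf_B\min\{w_1,w_2\}$ gives the minimum with constant $C=2^{p-1}C$ at $p=1$. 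Thus all six constants come out as stated, the only nonroutine ingredients being the essinf identity for the minimum and, in the $p>1$ minimum case, the duality computation flagged above.
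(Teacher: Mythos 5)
Your proof is correct and follows essentially the same route as the paper's: the same integral comparisons treating $w_1+w_2$ and $\max\{w_1,w_2\}$ simultaneously for $p>1$, the same duality reduction of $\min\{w_1,w_2\}$ to the $\max$ case via the conjugate weights with the identical constant bookkeeping $\bigl(2C^{1/(p-1)}\bigr)^{p-1}=2^{p-1}C$, and the same $\essinf$ identities for $p=1$. The differences are purely cosmetic (your $a_ib_i<C$ normalization, and the side remark that the sum even achieves constant $C$ when $p=1$), so nothing further is needed.
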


\begin{proof}
We have
\begin{align*}
\vint_B \max\{w_1,w_2\}\,dx &\le \vint_B (w_1+w_2)\,dx
= \vint_B w_1\,dx + \vint_B w_2\,dx, \\
\vint_B \min\{w_1,w_2\}\,dx 
&\le \min \biggl\{ \vint_B w_1\,dx, \vint_B w_2\,dx \biggr\}.
\end{align*}
For $p=1$, \eqref{eq-Ap-def-2} then follows directly from the facts that
\begin{align*}
\essinf_B w_1 + \essinf_B w_2 &\le 2 \essinf_B \max\{w_1,w_2\} 
\le 2 \essinf_B (w_1+w_2), \\ 
\min \Bigl\{ \essinf_B w_1, \essinf_B w_1 \Bigr\} &= \essinf_B \min\{w_1,w_2\}.
\end{align*}
For $p>1$, we have
\begin{align*}
\vint_B w_1\,dx + \vint_B w_2\,dx 
< C \biggl( \vint_{B} w_1^{1/(1-p)}\,dx \biggr)^{1-p} +
C \biggl( \vint_{B} w_2^{1/(1-p)}\,dx \biggr)^{1-p}.
\end{align*}
Since $1-p<0$ and
\begin{align*}
\vint_{B} w_j^{1/(1-p)}\,dx 
\ge \vint_{B} \max\{w_1,w_2\}^{1/(1-p)}\,dx 
\ge \vint_{B} (w_1+w_2)^{1/(1-p)}\,dx 
\end{align*}
for $j=1,2$,
this proves \eqref{eq-Ap-def-2} for $w_1+w_2$ and $\max\{w_1,w_2\}$.

To prove \eqref{eq-Ap-def-2} for $\min\{w_1,w_2\}$, we note that
\[
\min\{w_1,w_2\}^{1/(1-p)} = \max \bigl\{w_1^{1/(1-p)},w_2^{1/(1-p)} \bigr\},
\]
which by the above argument satisfies the $A_{p/(p-1)}$ condition 
with $2C^{1/(p-1)}$.
The duality between \eqref{eq-Ap-def-2} and the $A_{p/(p-1)}$ condition
concludes the proof. 
\end{proof}

\section{Proof of Proposition~\ref{prop-ac}}

\begin{proof}[Proof of Proposition~\ref{prop-ac}]
By considering a smaller interval if necessary, we can assume that 
$I$ is closed,
$u\in\Np(I,\mu)$ and $w,w^{1/(1-p)}\in L^1(I)$.
Let $g\in L^p(I,\mu)$ be an upper gradient of $u$.
Let $\eps>0$ be arbitrary and 
find $\de>0$ so that 
\begin{equation}   \label{eq-abs-cont-meas}
\int_E w^{1/(1-p)}\,dx<\eps \quad \text{whenever } E\subset I
\text{ and }|E|<\de.
\end{equation}
Consider finitely many pairwise disjoint intervals $(a_j,b_j)\subset I$ 
with 
\[
\sum_j|b_j-a_j|<\de. 
\]
The H\"older inequality then yields for each $j$,
\begin{equation}   \label{eq-ub-ua}
|u(b_j)-u(a_j)| \le \int_{a_j}^{b_j} g\,dx
\le \biggl( \int_{a_j}^{b_j} g^p w\,dx \biggr)^{1/p} 
             \biggl( \int_{a_j}^{b_j} w^{1/(1-p)} \,dx \biggr)^{1-1/p}.
\end{equation}
Summing over all $j$ and using~\eqref{eq-abs-cont-meas}
and the H\"older inequality for sums,
we conclude that
\begin{align*}
\sum_{j} |u(b_j)-u(a_j)| 
&\le \biggl( \sum_{j} \int_{a_j}^{b_j} g^p\,d\mu \biggr)^{1/p} 
      \biggl( \sum_{j} \int_{a_j}^{b_j} w^{1/(1-p)} \,dx \biggr)^{1-1/p} \\
&\le \biggl( \int_I g^p\,d\mu \biggr)^{1/p} \eps^{1-1/p}.
\end{align*}
Since $\eps>0$ was arbitrary and $g\in L^p(I,\mu)$, we conclude that $u$
is locally absolutely continuous (and thus a.e.\ differentiable) on $I$.

It remains to show that $|u'|\le g_u$ a.e.,
  since 
the converse inequality is trivial.
Let $x \in \interior I$. 
Replacing $(a_j,b_j)$ in~\eqref{eq-ub-ua} by 
$(x-h,x+h) \subset I$,
 we see that for all upper gradients 
$g\in L^p(I,\mu)$,
\[ 
\frac{|u(x+h)-u(x-h)|}{2h} 
\le \biggl( \frac{1}{2h} \int_{x-h}^{x+h} g^pw\,dx \biggr)^{1/p}
    \biggl( \frac{1}{2h} \int_{x-h}^{x+h} w^{1/(1-p)}\,dx \biggr)^{1-1/p}.
\] 
Letting $h\to0$, together with the observation that a.e.\ $x\in I$ is
a point of differentiability of $u$ as well as a Lebesgue point both of 
$g^pw$ and of $w^{1/(1-p)}$, shows that $|u'|\le g$ a.e.
As this holds for all upper gradients $g$ of $u$, 
and there is a sequence $\{g_j\}_{j=1}^\infty$ of upper gradients
tending to $g_u$ pointwise a.e.,
we conclude that
$|u'|\le g_u$ a.e.,
and thus $|u'| \in L^p(I,\mu)$.
\end{proof}

\section{Uniform assumptions}
\label{sect-uniform}

Sometimes it can be of interest to consider (semi)locally
admissible measures with uniform constants.
We therefore introduce the following notions.

\begin{deff} \label{deff-unif}
Any of the properties considered in 
Section~\ref{sect-doubl-PI}
is \emph{uniformly local} if there
are $R,C>0$ and $\la \ge 1$
such that the property holds within every ball $B_0\subset X$ of radius $R$,
with the same constants $C$ and $\la$.

The property is \emph{semiuniformly local} if 
for every $x$  it holds within some ball $B(x,R_x)$ 
with constants $C$ and $\la$ independent of $x$ and $R_x$.

If it holds within every ball $B_0$ with $C$ and $\la$ depending on the
radius (but not the centre) of $B_0$, then it is 
\emph{uniformly semilocal}.
\end{deff}

Uniformly local $A_p$ weights were studied by
Rychkov~\cite{Ryckhov} under the name ``local $A_p$ weights''
(primarily for the specific radius $R=1$).

A careful analysis of the proofs in this paper shows that the involved
constants depend on each other in a controllable way.
This, in particular, means that the implications
\ref{c-glob-Ap}  $\eqv$ \ref{c-semi-Ap} $\imp$ \ref{c-loc-Ap}  
$\eqv$ \ref{c-loc}
$\eqv$ \ref{c-Lip-loc}
and \ref{c-semi} $\imp$ \ref{c-semi-Ap} 
in Theorem~\ref{thm-p-adm-char} hold also if the
(semi)local notions are replaced by uniformly (semi)local ones, 
and \ref{c-glob-Ap} is replaced by its uniform version, where the global
$A_p$ constant of the extension $\wt$ 
may depend on $r_B$, but not
on the centre of $B$.

Moreover, 
the covers used in the proofs of 
\cite[Proposition~1.2 and Theorem~1.3]{BBsemilocal} 
(leading to the implication \ref{c-loc} $\imp$ \ref{c-semi}
in Theorem~\ref{thm-p-adm-char})
can be controlled by constants
which only depend on  $C$, $\la$ and the involved radii, but not on $x$.
Since the other estimates therein are quantitative as well, 
also the implication \ref{c-loc} $\imp$ \ref{c-semi} 
in Theorem~\ref{thm-p-adm-char}
holds for  uniformly (semi)locally \p-admissible measures on $\R$.

Note that the ``uniform'' properties require uniformity both in $C$ and $R$,
while the semiuniformity allows $R_x$ to depend on $x$.
In Bj\"orn--Bj\"orn~\cite[Section~6]{BBsemilocal},
this property was  shown 
to be sufficient for many qualitative, as well as some quantitative, 
properties of \p-harmonic functions, but it is not strong enough for the 
uniform conclusions above.
In fact, any positive continuous weight on $\R$ is semiuniformly locally
\p-admissible, but the weight $e^{e^{|x|}}$ is not even 
uniformly locally doubling.


\begin{thebibliography}{99}

\bibitem{ABBergman} \art{\auth{Bj\"orn}{A}}
        {Removable singularities in weighted Bergman spaces}
        {Czechoslovak Math. J.}{56}{2006}{179--227}

\bibitem{BBbook} \book{\auth{Bj\"orn}{A} \AND \auth{Bj\"orn}{J}}
        {Nonlinear Potential Theory on Metric Spaces}
    {EMS Tracts in Mathematics {\bf 17},
        European Math. Soc., Z\"urich, 2011}

\bibitem{BBsemilocal} \arttoappear{\auth{Bj\"orn}{A} \AND \auth{Bj\"orn}{J}}	
        {Local and semilocal Poincar\'e inequalities on metric spaces}
        {J. Math. Pures Appl.}

\bibitem{BBnoncomp}  \artprep{\auth{Bj\"orn}{A} \AND \auth{Bj\"orn}{J}}	
        {Poincar\'e inequalities and Newtonian Sobolev functions on noncomplete 
        metric spaces}
        {\emph{Preprint}, 2017, {\tt arXiv:/1705.02253}}

\bibitem{BBSliouville} \artprep{\auth{Bj\"orn}{A}, \auth{Bj\"orn}{J}  \AND
         \auth{Shanmugalingam}{N}}
         {Liouville's theorem for \p-harmonic functions 
           and quasiminizers with finite energy}
         {\emph{In preparation}}

\bibitem{JBFennAnn}  \art{\auth{Bj\"orn}{J}}
        {Poincar\'e inequalities for powers and products of admissible
         weights}
        {Ann. Acad. Sci. Fenn. Math.}{26}{2001}{175--188}

\bibitem{BBK-Ap} \art{\auth{Bj\"orn}{J}, \auth{Buckley}{S} \AND 
        \auth{Keith}{S}}
        {Admissible measures in one dimension}
        {Proc. Amer. Math. Soc.}{134}{2006}{703--705} 
        
\bibitem{ChuaWheeden} \art{\auth{Chua}{S-K} \AND \auth{Wheeden}{R. L}}
        {Sharp conditions for weighted $1$-dimensional Poincar\'e inequalities}
        {Indiana Univ. Math. J.} {49} {2000} {143--175}
	
\bibitem{DaGaMa} \art{\auth{Danielli}{D}, \auth{Garofalo}{N}
	\AND \auth{Marola}{N}}
	{Local behavior of \p-harmonic Green functions in metric spaces}
	{Potential Anal.} {32} {2010} {343--362}


\bibitem{DiMar-Spe} \art{\auth{Di Marino}{S} \AND \auth{Speight}{G}}
        {The \p-weak gradient depends on $p$}
        {Proc. Amer. Math. Soc.}{143}{2015}{5239--5252}
          
\bibitem{Esti-Xining}   \art{\auth{Durand-Cartagena}{E} \AND 
         \auth{Li}{X}}
         {Preservation of \p-Poincar\'e inequality for large $p$ 
           under sphericalization 
         and flattening}
         {Illinois J. Math.}{59}{2015}{1043--1069}
          
\bibitem{FaJeKe} \art{\auth{Fabes}{E. B}, \auth{Jerison}{D} 
	\AND \auth{Kenig}{C. E}}
        {The Wiener test for degenerate elliptic equations}
        {Ann. Inst. Fourier\/ \textup{(}Grenoble\/\textup{)}}
        {32{\rm:3}}{1982}{151--182}

\bibitem{FaKeSe} \art{\auth{Fabes}{E. B}, \auth{Kenig}{C. E} 
	\AND \auth{Serapioni}{R. P}}
        {The local regularity of solutions of degenerate elliptic
          equations}
        {Comm. Partial Differential Equations}{7}{1982}{77--116}

\bibitem{GaMa} \art{\auth{Garofalo}{N}
	\AND \auth{Marola}{N}}
	{Sharp capacitary estimates for rings in metric spaces}
	{Houston J. Math.} {36} {2010} {681--695}

\bibitem{HaKo-meets} \art{\auth{Haj\l asz}{P} \AND \auth{Koskela}{P}}
        {Sobolev meets Poincar\'e}
        {C. R. Acad. Sci. Paris S\'er. I Math.}{320}{1995}{1211--1215} 

\bibitem{HeKiMa} \book{\auth{Heinonen}{J},
	\auth{Kilpel\"ainen}{T}
	\AND \auth{Martio}{O}}
        {Nonlinear Potential Theory of Degenerate Elliptic Equations}
        {2nd ed., Dover, Mineola, NY, 2006}

\bibitem{HeKo98} \art{\auth{Heinonen}{J} \AND \auth{Koskela}{P}}
	{Quasiconformal maps in metric spaces with controlled geometry}
	{Acta Math.} {181} {1998} {1--61}
	
\bibitem{HKST} \book{\auth{Heinonen}{J}, \auth{Koskela}{P},
	\auth{Shanmugalingam}{N} \AND \auth{Tyson}{J. T}}
       {Sobolev Spaces on Metric Measure Spaces}
	{New Mathematical Monographs {\bf 27}, Cambridge Univ. Press,
        Cambridge, 2015}
				
\bibitem{HoSh} \art{\auth{Holopainen}{I} \AND \auth{Shanmugalingam}{N}}
        {Singular functions on metric measure spaces}
        {Collect. Math.} {53} {2002} {313--332}
       
\bibitem{Keith} \art{\auth{Keith}{S}}
        {Modulus and the Poincar\'e inequality on metric measure spaces}
        {Math. Z.}{245}{2003}{255--292} 

\bibitem{KilKoMa} \art{\auth{Kilpel\"ainen}{T}, \auth{Koskela}{P}
         \AND \auth{Masaoka}{H}}
         {Lattice property of \p-admissible weights}
         {Proc. Amer. Math. Soc.}{143}{2015}{2427--2437} 

\bibitem{KiSh01} \art{Kinnunen, J. \AND Shanmugalingam, N.}
        {Regularity of quasi-minimizers on metric spaces}
        {Manuscripta Math.} {105} {2001} {401--423}

\bibitem{KoMc} \art{\auth{Koskela}{P} \AND \auth{MacManus}{P}}
        {Quasiconformal mappings and Sobolev spaces}
        {Studia Math.}{131}{1998}{1--17}
        
\bibitem{Ryckhov} \art{\auth{Rychkov}{V. S}}
        {Littlewood--Paley theory and function spaces with $A_p^{\text{loc}}$ weights}
        {Math. Nachr.} {224} {2001} {145--180} 
        
\bibitem{Sh-rev} \art{\auth{Shanmugalingam}{N}}
        {Newtonian spaces\textup{:} An extension of Sobolev spaces
        to metric measure spaces}
        {Rev. Mat. Iberoam.}{16}{2000}{243--279}

\bibitem{Sh-harm} \art{\auth{Shanmugalingam}{N}}
        {Harmonic functions on metric spaces}
        {Illinois J. Math.}{45}{2001}{1021--1050}

\end{thebibliography}
\end{document}